\documentclass[AMS]{article} 

\usepackage{xcolor}
\usepackage{geometry} 
\usepackage{graphicx} 

\usepackage{booktabs} 
\usepackage{array} 
\usepackage{paralist} 
\usepackage{verbatim} 
\usepackage{subfig} 

\usepackage{fancyhdr} 
\usepackage{amssymb}
\usepackage{amsmath}
\usepackage{epstopdf}
\usepackage[T1]{fontenc}
\usepackage{t1enc}
\usepackage[utf8]{inputenc}

\usepackage{sectsty}
\allsectionsfont{\sffamily\mdseries\upshape} 

\usepackage{hyperref}

\usepackage[nottoc,notlof,notlot]{tocbibind} 
\usepackage[titles,subfigure]{tocloft} 

\newtheorem{theorem}{Theorem}[section]
\newtheorem{lemma}[theorem]{Lemma}

\newtheorem{fact}[theorem]{Fact}
\newtheorem{cor}[theorem]{Corollary}

\newtheorem{claim}[theorem]{Claim}

\newtheorem{defi}[theorem]{Definition}

\newcommand{\ep}{\varepsilon}

\newcommand{\cB}{\mathcal{B}}
\newcommand{\cD}{\mathcal{D}}

\def\qedf{\hfill $\Box$}

\title{Decomposition of degree-regular graphs into quasi-random pairs without the Regularity lemma} 
\author{ B\'ela Csaba\thanks{The research of the author was supported
by the Ministry of Innovation and Technology of Hungary from the National Research, Development and
Innovation Fund, project no. TKP2021-NVA-09. E-mail: bcsaba@math.u-szeged.hu. ORCID: https://orcid.org/0000-0002-6696-3219}\\ Bolyai Institute, University of Szeged, Hungary. }
\date{} 

\begin{document}
\maketitle

\begin{abstract}
The Szemer\'edi Regularity Lemma, in combination with the Blow-up Lemma, form the Regularity Method, a fundamental tool in graph embeddings, albeit restricted to very large and dense graphs. We propose an alternative vertex-partitioning framework that remains effective even as the density tends to zero and without requiring astronomically large vertex sets. This approach, while narrower in scope, extends regularity-type techniques to relatively small graphs previously inaccessible to the Regularity  Method. As an application,
we use this novel vertex-partitioning method for bipartite packing problems.
\end{abstract}




\section{Introduction}

The Regularity Lemma of Endre Szemer\'edi~\cite{Szemeredi} together with the Blow-up Lemma~\cite{KSSzBl, KSSzBl2} constitutes the so-called Regularity Method. This is an extremely powerful tool, which has found numerous applications in graph embedding problems. However, due to the Regularity Lemma, it is only useful for very large graphs whose edge density is bounded below by some positive constant.

We shall employ an alternative method that in some cases may serve as a substitute for the Regularity Lemma, and which remains applicable even when the density tends to zero (although slowly). Moreover, it does not require that the number of vertices in the graph is extremely large. This alternative method also provides regular pairs, and can therefore be used in conjunction with the Blow-up Lemma.

A result~\cite{Gowers} of Gowers, improved further by Conlon and Fox~\cite{CF},  asserts that the number of clusters in the Regularity Lemma has a tower-type 
lower bound. Consequently, in general one cannot partition the vertex set of a graph into disjoint clusters so that 
almost all edges are in quasirandom cluster pairs. In our approach only a tiny proportion of the edges of the graph
is kept. These edges are arranged into vertex disjoint bipartite subgraphs. 
The union of these subgraphs is a super-matching in which every vertex is a cluster, and every super-edge is a pair of clusters with quasirandom edge distribution between them. Our starting point is an earlier edge decomposition theorem of the author~\cite{aus}. We use this result in conjunction with a randomized algorithm in order to obtain the vertex partition 
having the above properties.    

In our approach the graph to be decomposed cannot be arbitrary: it is crucial that the degrees
are about the same. Fortunately, this is not a strong restriction in many cases, we demonstrate this
via an application for bipartite packing problems in degree-regular or approximately degree-regular\footnote{It is somewhat unfortunate,
that the word {\em regular} in graph theory refers to a quasirandom pair, and also to a graph in which every degree is the same. Here we refer to the second meaning when we write `degree-regular'.} graphs.
Given two graphs, $G$ and $H,$ an {\it $H$-packing} in $G$ is a collection of vertex disjoint copies of $H$
in $G.$
The cases when $\chi(H)\ge 3$ are well-understood, due to the theorems by Komlós, Sárközy and Szemerédi~\cite{KSSz}, and by Kühn and Osthus~\cite{KO09}. When $H$ is bipartite, the problem becomes different, this question was first
considered by Kühn and Osthus~\cite{KO}. A similar problem on packing by subdivision was asked
by Verstra\"ete~\cite{V}. Recently there has been significant new developments in both the bipartite packing~\cite{LMS} and the packing by subdivision~\cite{MPRT} problems. 
Since in both problems the host graph is degree-regular or approximately degree-regular,
this gives us the opportunity to use our vertex decomposition theorem for some cases which
were not considered so far.


The paper consists of two main parts. In the first part we prove a decomposition theorem for (approximately) degree-regular graphs. In this part first we review the basic definitions and tools we need, and then state
and prove the vertex decomposition theorem. After that, in the second part of the paper, we focus on the bipartite $H$-packing and the packing by subdivision problems, first discussing some further tools and
finally proving our results.

 \section{Notation, definitions, main tools}

We only consider simple graphs in this paper. Given a graph $G=(V, E)$ we use the notation $v(G)=|V|$ and $e(G)=|E|.$ Given a set 
$X\subset V,$ $G[X]=(X, E_X),$ where $E_X=\{uv: uv\in E, \ u, v\in X\}.$ For 
disjoint subsets $X, Y\subset V,$ we let 
$G[X, Y]$ denote the bipartite subgraph of 
$G$ with parts $X$ and $Y$ that contains all the edges of $G$ with one endpoint in $X$ and the other endpoint in $Y.$ For every vertex $v\in V,$ the {\it neighborhood} of $v$ is denoted by 
$N_G(v),$ and the {\it degree} of $v$ is denoted by $\deg_G(v)=|N_G(v)|.$ Given a set $S\subset V,$ we let $N_G(v, S)=N_G(v)\cap S$ and 
$\deg_G(v, S)=|N_G(v, S)|,$ the subscripts may be omitted. For an $H\subseteq G$ and $v\in V(G),$ the number of neighbors of $v$ in $H$
is sometimes denoted by $deg_G(v, H).$ If we write $G=(A, B; E),$ this means that $G$ is bipartite with parts $A$ and $B,$ and edge set $E.$ If it is clear from the context, that a graph in question is bipartite, we may only write out
the vertex parts, and omit the letter ``$E$''.

The {\it density} of $G$ is defined to be $d_G=e(G)\cdot {\binom{v(G)}{2}}^{-1}.$ The {\it bipartite density} of bipartite subgraphs of $G$ with parts $A$ and $B$ 
is $d_G(A, B)=\frac{e(G[A, B])}{|A|\cdot |B|}.$ Sometimes the subscript may be omitted when there is no confusion. Similarly, when a graph in question is bipartite, density will mean bipartite density.

Given numbers $x, y$ we say that $z=x\pm y,$ if $x-y \le z \le x+y.$ If $n\ge 1$ is an integer, then we let 
$[n]=\{1, \dots, n\}.$ 
For two numbers, $\alpha$ and  $\beta,$ the notation ``$\alpha \ll \beta$'' means that $\alpha$ is sufficiently smaller than $\beta.$ We remark, that whenever this notation is used in the paper, the relation of $\alpha$ and 
$\beta$ can be explicitly calculated. 
Using the notation ``$\ll$'' enables us to concentrate on the essential parts of the proofs.

\subsection{Regular pairs and bundles}

Let $\cB$ denote the class of {\it balanced} bipartite graphs, that is, bipartite graphs having equal-sized parts, and for a positive integer $m,$ let ${\cB}_m$ denote the class of balanced bipartite graphs having $m$ vertices in both parts. If a bipartite graph has unequal vertex parts, we call it {\it unbalanced}.

\medskip

\begin{defi} 
Let $0<\ep, \delta <1$ be real numbers.
We say that a bipartite graph $H=(A, B)$ is an $\varepsilon$-{\em regular pair}, if for every $X\subseteq A,$ $Y\subseteq B$ 
with $|X|\ge \varepsilon |A|$ and $|Y|\ge \varepsilon |B|$ we have 
$$|d_H(A, B) - d_H(X, Y)|\le \ep.$$ 

We call $H$ an $(\varepsilon, \delta)$-{\em super-regular pair}, if in addition every $v\in A$ has at least $\delta |B|$ neighbors and every $u\in B$ has at least 
$\delta |A|$ neighbors.

We call $H$ an {\em $(\ep, \delta)$-bundle,} if it is an $\ep$-regular pair, and for every vertex $v\in A$ we have 
$deg(v)=\delta |B| \pm \ep |B|,$ and $deg(u)=\delta |A| \pm \ep |A|$ for every $u\in B.$
\end{defi}

Note, that an $(\ep, \delta)$-bundle is also an $(\varepsilon, \delta-\ep)$-super-regular pair,
and has density $\delta\pm \ep.$

The following well known fact below will prove to be useful, the proof is omitted.

\begin{fact}\label{reg}
Assume that $H=(A, B)$ is an $\ep$-regular pair with density $d_H.$ Let $A'=\{x\in A: deg(x)<(d_H-\ep)|B|\}$ and 
$A''=\{x\in A: deg(x)>(d_H+\ep)|B|\}.$ Similarly, let $B'=\{x\in B: deg(x)<(d_H-\ep)|A|\}$ and 
$B''=\{x\in B: deg(x)>(d_H+\ep)|A|\}.$ Then $|A'|, |A''|\le \ep |A|$ and $|B'|, |B''|\le \ep |B|.$  
\end{fact}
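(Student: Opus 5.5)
The plan is a contradiction argument applied to each of the four sets separately; by symmetry it suffices to treat $A'$ and $A''$, since the statements for $B'$ and $B''$ follow by exchanging the roles of $A$ and $B$. The basic observation is that for $X\subseteq A$ and $Y=B$,
$$d_H(X,B)=\frac{1}{|X|\,|B|}\sum_{x\in X}\deg(x),$$
so the bipartite density between $X$ and all of $B$ is just the average normalized degree of the vertices in $X$.

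For $A'$, suppose $|A'|>\ep|A|$. We take $X=A'$ and $Y=B$. Both sets are large enough for the regularity condition: $|X|\ge\ep|A|$, and $|Y|=|B|\ge\ep|B|$ because $\ep<1$. So $\ep$-regularity gives $|d_H(A,B)-d_H(A',B)|\le\ep$. On the other hand, every $x\in A'$ has $\deg(x)<(d_H-\ep)|B|$. Averaging over $A'$ (which is nonempty) gives $d_H(A',B)<d_H-\ep$, that is, $d_H-d_H(A',B)>\ep$. This contradicts the previous inequality, so $|A'|\le\ep|A|$.

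The argument for $A''$ is identical with the inequalities reversed. Each $x\in A''$ has $\deg(x)>(d_H+\ep)|B|$, so $d_H(A'',B)>d_H+\ep$, which again contradicts regularity if $|A''|>\ep|A|$.

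There is one boundary case to note. If $|A'|=\ep|A|$ exactly, the regularity condition would still apply and give a contradiction, so the bound holds in this case too. Nothing beyond this bookkeeping is delicate.
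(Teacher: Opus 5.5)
Your proof is correct, and it is the standard argument the paper has in mind when it omits the proof. You apply $\ep$-regularity to $(A',B)$ (resp.\ $(A'',B)$), whose density is the average normalized degree over that set, and this contradicts $|A'|>\ep|A|$ (resp.\ $|A''|>\ep|A|$); the boundary remark is unnecessary since the bound is non-strict, though you rightly note the argument even gives $|A'|<\ep|A|$.
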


We will use the so called Slicing lemma~\cite{KS} at various points in the paper.

\begin{fact}\label{slicing}
Assume that $H=(A, B)$ is an $\ep$-regular pair with density $d_H,$ and for some $\alpha >\ep$ let $A'\subset A,$
$|A'|\ge \alpha |A|$ and  $B'\subset B,$ $|B'|\ge \alpha |B|.$ Then $(A', B')$ is an $\ep'$-regular pair with 
$\ep'=\max \{\ep/\alpha, 2\ep\}$ and for its density $d'$ we have $|d'-d|<\ep.$ 
\end{fact}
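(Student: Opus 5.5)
Fact~\ref{slicing} is proved directly from the definition of an $\ep$-regular pair, in two steps: first compare the density $d'$ of $(A',B')$ with $d$, then verify the regularity condition inside $(A',B')$.

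\emph{Density.} Since $\alpha>\ep$, we have $|A'|\ge\alpha|A|\ge\ep|A|$ and $|B'|\ge\ep|B|$. Applying $\ep$-regularity of $H$ with $X=A'$ and $Y=B'$ gives $|d_H(A',B')-d_H|\le\ep$, that is, $|d'-d|\le\ep$. The statement asks for strict inequality. That holds if the strict form of the definition is used, or after an infinitesimal adjustment of $\ep$. Since this is only a boundary case, I would note it and not dwell on it.

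\emph{Regularity.} Let $\ep'=\max\{\ep/\alpha,2\ep\}$. Take $X\subseteq A'$ and $Y\subseteq B'$ with $|X|\ge\ep'|A'|$ and $|Y|\ge\ep'|B'|$. Then
\[
|X|\ge \frac{\ep}{\alpha}\,|A'|\ge \frac{\ep}{\alpha}\cdot\alpha|A| = \ep|A|,
\]
and likewise $|Y|\ge\ep|B|$. So $X$ and $Y$ are admissible test sets for the original pair $H$, and regularity of $H$ gives $|d_H(X,Y)-d|\le\ep$. By the triangle inequality,
\[
|d_H(X,Y)-d'|\le |d_H(X,Y)-d|+|d-d'|\le 2\ep\le\ep'.
\]
This is exactly the $\ep'$-regularity of $(A',B')$. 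Note that $\ep'$ may exceed $1$; in that case the condition is vacuous or trivial, and the argument still goes through.

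There is no serious obstacle here. The one point needing care is seeing why both terms in the maximum are required. The factor $\ep/\alpha$ is what converts the relative size threshold inside $A'$ and $B'$ into an absolute threshold in $A$ and $B$. The term $2\ep$ absorbs the shift of the reference density from $d$ to $d'$.
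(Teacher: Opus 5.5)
The paper gives no proof of Fact~\ref{slicing}; it quotes it as the Slicing lemma from \cite{KS}. Your argument is the standard proof and it is correct. You test $(A',B')$ itself against $H$, pull admissible $X\subseteq A'$, $Y\subseteq B'$ back to admissible sets for $H$ using $\ep'\ge\ep/\alpha$, and finish with the triangle inequality.

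The one thing to fix is how you handle the strict inequality $|d'-d|<\ep$. Under the paper's definition, which uses $\le\ep$, this conclusion is not merely unproved but false in general. Saying it holds ``after an infinitesimal adjustment of $\ep$'' would only prove a different statement. Here is a counterexample.
\begin{itemize}
\item Let $A=\{a_1,a_2,a_3\}$ and $B=\{b_1,b_2,b_3\}$. Let the edges of $H$ be the five pairs $a_ib_j$ with $i=3$ or $j=3$, so $d_H=5/9$.
\item Take $\ep=5/9$. Then $\ep|A|=5/3$, so admissible test sets have at least two vertices on each side.
\item Every admissible pair spans density in $[0,3/4]$, which deviates from $5/9$ by at most $5/9$. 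So $H$ is $\ep$-regular.
\item Now take $\alpha=2/3>\ep$, $A'=\{a_1,a_2\}$ and $B'=\{b_1,b_2\}$. Then $d'=0$, so $|d'-d|=\ep$ exactly.
\end{itemize}
The correct density conclusion is therefore $|d'-d|\le\ep$. That is exactly what you proved, and it is all your regularity step uses. You should state it that way rather than set the issue aside as a boundary case.
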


We are going to need the lemma below.

\begin{lemma}\label{bundle}
Let $0<\ep<1/10$ and $3\ep \le d\le 1/2,$ and assume that $F\in \cB_m$ is an $\ep$-regular pair with density $d.$ 
Then there is a $(3\ep, d)$-bundle $H\subset F,$ $H\in \cB_{m_1}$ with density $d\pm \ep$ 
such that $m_1\ge (1-2\ep)m.$ Moreover, $e(H)\ge e(F)-2\ep m^2-2\ep (d-\ep)m^2\ge e(F)-4\ep m^2.$
\end{lemma}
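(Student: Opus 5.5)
Write $F=(A,B)$ with $|A|=|B|=m$. We remove the atypical-degree vertices given by Fact~\ref{reg}, balance the two sides, and then use the Slicing lemma (Fact~\ref{slicing}) to control regularity and degrees in what remains.

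\emph{Removing exceptional vertices.} By Fact~\ref{reg}, the sets $A'\cup A''$ and $B'\cup B''$ of vertices whose degree is not $(d\pm\ep)m$ have size at most $2\ep m$ each. Let $A_0\subseteq A'\cup A''$ and $B_0\subseteq B'\cup B''$ be these sets. If $|A_0|\neq|B_0|$, pad the smaller one with arbitrary further vertices so that $|A_0|=|B_0|\le 2\ep m$. Set $A_1=A\setminus A_0$, $B_1=B\setminus B_0$ and $m_1=|A_1|=|B_1|\ge(1-2\ep)m$, and let $H=F[A_1,B_1]$.

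\emph{Regularity and density.} Apply Fact~\ref{slicing} with $\alpha=1-2\ep$. Since $\ep<1/10$, we have $\ep/\alpha\le 2\ep$, so $H$ is $2\ep$-regular, hence $3\ep$-regular, and its density is $d\pm\ep$.

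\emph{Degrees.} Every remaining vertex $v\in A_1$ had degree $(d\pm\ep)m$ in $F$. It loses at most $2\ep m$ neighbours, namely those in $B_0$. Hence
\[
\deg_H(v)\in\bigl[(d-\ep)m-2\ep m,\;(d+\ep)m\bigr].
\]
Measured against $m_1\in[(1-2\ep)m,m]$, we need $\deg_H(v)=dm_1\pm 3\ep m_1$.

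\begin{itemize}
\item \emph{Upper side.} $(d+\ep)m\le dm_1+2\ep dm+\ep m$. Since $d\le1/2$, this is at most $dm_1+2\ep m\le dm_1+3\ep m_1$, using $m\le m_1/(1-2\ep)$ and $\ep<1/10$.
\item \emph{Lower side.} $(d-3\ep)m\ge(d-3\ep)m_1$, because $d\ge3\ep$.
\end{itemize}

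This lower-side estimate is exactly where the hypothesis $d\ge3\ep$ is needed. The main obstacle is this bookkeeping of constants. In particular, the loss of up to $2\ep m$ neighbours must be bounded more sharply, via the $\ep$-regularity (counting edges into $B_0$), if the crude bound fails. The cleaner route is to use that a vertex loses at most $\deg(v,B_0)$ and to rescale.

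\emph{Edge count.} The removed edges are those incident to $A_0$ or $B_0$. Edges at the exceptional (low- or high-degree) vertices number at most $2\ep m\cdot m$. The padding vertices are typical, with degree at most $(d+\ep)m$. Bounding these contributions separately gives
\[
e(H)\ge e(F)-2\ep m^2-2\ep(d-\ep)m^2 .
\]
Since $d\le1/2$, the last term is at most $2\ep m^2$, which yields $e(H)\ge e(F)-4\ep m^2$.
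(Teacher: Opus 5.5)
Your removal of the atypical vertices, the balancing step, the $2\ep$-regularity via the Slicing lemma, and the degree estimates are correct. They are essentially the paper's proof; the paper checks regularity directly from $F$ (subsets of size at least $2\ep m_1\ge \ep m$ have density $d\pm\ep$), which is the same argument. Your hedge that the crude loss of $2\ep m$ neighbours might need sharpening is unnecessary. Your own computation, like the paper's chain $(d-3\ep)m_1\le (d-3\ep)m\le \deg(v)\le (d+\ep)m\le (d+3\ep)m_1$, already closes that step, so you can drop the remark.

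The gap is in the intermediate edge bound $e(H)\ge e(F)-2\ep m^2-2\ep(d-\ep)m^2$: the pieces you list do not add up to it.
\begin{itemize}
\item Your claim that the low- and high-degree vertices together carry at most $2\ep m\cdot m$ edges is unjustified. Fact~\ref{reg} only gives $|A'|,|A''|,|B'|,|B''|\le \ep m$. So the high-degree vertices alone may carry nearly $2\ep m^2$ edges, and the low-degree ones up to $2\ep m\cdot (d-\ep)m$ more.
\item The up to $2\ep m$ padding vertices of degree up to $(d+\ep)m$ contribute up to $2\ep(d+\ep)m^2$, not $2\ep(d-\ep)m^2$.
\end{itemize}
So \emph{bounding these contributions separately} gives something weaker than what you state. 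The paper charges at most $(d-\ep)m$ per discarded low-degree vertex and at most $m$ per discarded high-degree vertex. That produces exactly the stated expression, although the paper also does not account for the balancing vertices separately.

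To make the count airtight including padding:
\begin{itemize}
\item Write $e(F)-e(H)=e(A_0,B)+e(A_1,B_0)$ with $A_1=A\setminus A_0$.
\item By symmetry, assume the padding set $P$ lies in $B$. Then $A_0=A'\cup A''$, $a=|A_0|\le 2\ep m$, and $|P|=a-|B'|-|B''|$.
\item A vertex of $B''$ is charged at most $|A_1|=m-a$ edges, not $m$. Using $m-a-(d+\ep)m\ge 0$, this gives $e(A_1,B_0)\le a(d+\ep)m+|B''|\bigl(m-a-(d+\ep)m\bigr)\le adm+\ep(1-d-\ep)m^2$.
\end{itemize}
Combining with $e(A_0,B)\le |A'|(d-\ep)m+|A''|m$ yields
\[ e(F)-e(H)\le |A'|(2d-\ep)m+|A''|(1+d)m+\ep(1-d-\ep)m^2\le 2\ep m^2+2\ep(d-\ep)m^2 . \]
The weaker bound $e(H)\ge e(F)-4\ep m^2$ is immediate regardless, since at most $2\ep m$ vertices, each of degree at most $m$, are removed from each side.
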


\noindent {\bf Proof:} 
Discard the vertices that have less than $(d-\ep)m$ neighbors, and then those, which have more than $(d+\ep)m$ neighbors. By Fact~\ref{reg} we discarded at most 
$2\ep m$ vertices from both parts of $F.$ Then, if necessary, we discard some further vertices from the larger part to make the subgraph balanced. This way we have obtained $H,$ which has $m_1\ge (1-2\ep)m$ vertices in both parts. 
Clearly, if we take arbitrary subsets of sizes at least $2\ep m_1\ge \ep m$ from both parts, the density of the subgraph spanned by
these subsets is $d\pm \ep$ by $\ep$-regularity of $F.$ Hence, the density of $H$ is $d\pm \ep.$ 

Since every remaining vertex that belongs to $H$ lost at most $2\ep m$ neighbors, this shows that
$$(d-3\ep)m_1\le (d-3\ep)m\le deg(v)\le (d+\ep)m\le (d+3\ep)m_1$$ for every vertex $v\in V(H),$ here the last inequality follows from the upper bounds for $\ep$ and $d.$ Finally, we get the claimed lower bound for $e(H)$ since from $e(F)$ we subtracted
at most $(d-\ep)m$ edges for every discarded vertex that had low degree, and at most $m$ edges for every high degree discarded vertex.
\qedf

\medskip

\begin{cor}\label{kov}
Let $0<\ep < 1/10$ and $2\sqrt{\ep}\le d\le 1/2. $ If $F\in \cB_m$ is an $\ep$-regular pair with density 
$d,$ then it contains a $(3\ep, d)$-bundle $H\subset F,$ $H\in \cB_{m_1}$ with density $d\pm \ep$ 
such that $m_1\ge (1-2\ep)m,$ and $e(H)\ge (1-d)e(F).$
\end{cor}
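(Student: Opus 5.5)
Corollary~\ref{kov} follows from Lemma~\ref{bundle}; the only new ingredient is turning the additive loss $4\ep m^2$ into the multiplicative loss $d\, e(F)$. First note that the hypotheses of Lemma~\ref{bundle} hold. Since $\ep<1/10$ we have $\sqrt{\ep}>\ep$, and so
$$d\ge 2\sqrt{\ep}\ge 3\ep$$
(indeed $2\sqrt{\ep}\ge 3\ep$ is equivalent to $\ep\le 4/9$). Together with $d\le 1/2$ this lets us apply Lemma~\ref{bundle} to $F$. It yields a $(3\ep,d)$-bundle $H\subset F$, $H\in\cB_{m_1}$, with density $d\pm\ep$, $m_1\ge(1-2\ep)m$, and
$$e(H)\ge e(F)-4\ep m^2.$$

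It remains to show $4\ep m^2\le d\,e(F)$, since then $e(H)\ge e(F)-d\,e(F)=(1-d)e(F)$. Because $F\in\cB_m$ has density $d$, we have $e(F)=dm^2$. The needed inequality is therefore $4\ep m^2\le d^2m^2$, that is, $d^2\ge 4\ep$, which is exactly $d\ge 2\sqrt{\ep}$. This is where the assumption $d\ge2\sqrt{\ep}$ (strengthening $d\ge3\ep$ in the lemma) is used.

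All other conclusions (the bundle property, the density $d\pm\ep$, and $m_1\ge(1-2\ep)m$) carry over verbatim from Lemma~\ref{bundle}. There is no real obstacle: the one step requiring care is the conversion $e(F)=dm^2$, which uses that the density of $F$ is exactly $d$ as a bipartite graph on $m\times m$ vertices.
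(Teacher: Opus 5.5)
Your proof is correct and matches the paper's argument. You apply Lemma~\ref{bundle}, after rightly checking that $d\ge 2\sqrt{\ep}$ implies its hypothesis $d\ge 3\ep$, and then absorb the additive loss via $4\ep m^2\le d^2m^2=d\,e(F)$; the paper compresses this to the single remark that the corollary follows from $4\ep\le d^2$.
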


\noindent {\bf Proof:} This follows easily from the condition that $4\ep \le d^2.$\qedf

\subsection{A probabilistic inequality}

We use random methods at various points in the paper, and need large deviation bounds for discrete probability distributions.
The following inequality, a generalized version of Chernoff's bound,  is Theorem 2.8 in~\cite{JLR}.

\begin{theorem}\label{Chernoff} 
Assume that $X$ is the sum of $k$ independent indicator random variables: $X=X_1+\ldots+X_k.$ If $0\le \mu \le 3/2,$ then
$$P(|X-\mathbb{E}[X]|\ge \mu \mathbb{E}[X])\le 2e^{-\frac{\mu^2}{3} \mathbb{E}[X]}.$$ 
\end{theorem}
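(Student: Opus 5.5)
This is the standard Chernoff-type bound, cited from~\cite{JLR}, and I would prove it directly by the exponential moment method, treating the two tails separately and adding the bounds. Write $X_i$ with $P(X_i=1)=p_i$ and $\lambda=\mathbb{E}[X]=\sum_i p_i$. By independence and $1+x\le e^x$, for every real $t$
$$\mathbb{E}[e^{tX}]=\prod_{i=1}^k\bigl(1+p_i(e^t-1)\bigr)\le \exp\bigl(\lambda(e^t-1)\bigr).$$

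\emph{Upper tail.} For $t>0$, Markov's inequality applied to $e^{tX}$ gives
$$P(X\ge(1+\mu)\lambda)\le \exp\bigl(\lambda(e^t-1)-t(1+\mu)\lambda\bigr).$$
Choosing $t=\ln(1+\mu)$ yields the bound $\exp(-\lambda\varphi(\mu))$, where $\varphi(x)=(1+x)\ln(1+x)-x$.

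\emph{Lower tail.} For $0<\mu<1$, apply Markov to $e^{-tX}$ with $t=-\ln(1-\mu)$. This gives
$$P(X\le(1-\mu)\lambda)\le\exp(-\lambda\varphi(-\mu)).$$
For $\mu>1$ the event $X\le(1-\mu)\lambda<0$ is empty (when $\lambda>0$), so its probability is $0$. For $\mu=1$ we have $P(X=0)=\prod_i(1-p_i)\le e^{-\lambda}$. The degenerate case $\lambda=0$ is trivial, since then $X=0$ almost surely and the right-hand side is $2$.

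\emph{Reduction to calculus.} What remains is to show $\varphi(\mu)\ge\mu^2/3$ for $0\le\mu\le 3/2$ and $\varphi(-\mu)\ge\mu^2/3$ for $0\le\mu<1$.

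For the lower tail, expanding $\varphi(-x)=(1-x)\ln(1-x)+x$ as a series gives $\sum_{j\ge2}x^j/(j(j-1))\ge x^2/2$, which suffices.

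For the upper tail I would use the classical inequality
$$\varphi(x)\ge \frac{x^2}{2(1+x/3)}\qquad (x\ge 0).$$
To prove it, set $g(x)=\varphi(x)-x^2/(2+2x/3)$. Then $g(0)=g'(0)=0$, and one checks $g''\ge0$. This uses $\varphi''(x)=1/(1+x)$, and the second derivative of $x^2/(2+2x/3)$ equals $27/(3+x)^3$, which is at most $1/(1+x)$. Once the inequality holds, note that for $x\le 3/2$ we have $2(1+x/3)\le 3$, hence $\varphi(x)\ge x^2/3$. This is precisely where the hypothesis $\mu\le 3/2$ enters.

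\emph{Conclusion.} Adding the two tail bounds, each at most $e^{-\mu^2\lambda/3}$, gives the factor $2$ and completes the proof. The only step needing care is the elementary inequality for $\varphi$ on $[0,3/2]$. If the second-derivative comparison turns out to be messier than expected, I would instead verify $\varphi(x)-x^2/3\ge0$ directly: it vanishes at $0$, its derivative $\ln(1+x)-2x/3$ is concave, and that derivative is positive at both ends of $(0,3/2]$ since $\ln(2.5)>1$.
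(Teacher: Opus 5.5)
The paper gives no proof of this statement; it quotes it as Theorem 2.8 of~\cite{JLR}, and your exponential-moment argument is correct and is essentially the standard proof given in that source: the bound $\mathbb{E}[e^{tX}]\le\exp(\lambda(e^t-1))$, the optimal choice of $t$, and the estimates $\varphi(x)\ge x^2/(2(1+x/3))$ and $\varphi(-x)\ge x^2/2$. The one slip is in your optional fallback: $\ln(2.5)\approx 0.916<1$, so $\ln(1+x)-2x/3$ is negative at $x=3/2$ and that argument fails as written, but this is immaterial because your main second-derivative step is valid (indeed $(3+x)^3-27(1+x)=x^2(9+x)\ge 0$), and the fallback could be repaired by noting that $\varphi(x)-x^2/3$ first increases and then decreases on $[0,3/2]$ and checking $\varphi(3/2)=2.5\ln 2.5-1.5\approx 0.79>0.75$.
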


\subsection{Criterion for quasi-randomness}
Working with random subgraphs of a regular pair spanned by random subsets is crucial 
in the paper. 
For proving quasi-randomness of a random subgraph we obtain this way, we use the following result by Kohayakawa and Rödl from~\cite{KohRdl}:

\begin{theorem}\label{KohRod}
Let $\eta$ be a constant with $0<\eta <1.$ Let $G = (V, E)$ be a graph with
$(A, B)$ a pair of disjoint, nonempty subsets of $V$ with $|A| \ge 2/\eta.$  Set 
$\rho= d(A, B) = e(A, B)/(|A|\cdot |B|).$ Let $\cD$ be the collection of all pairs $\{x, x'\}$ of vertices of $A$ for which
\begin{itemize}
\item [(i)] $deg(x, B), deg(x', B) \ge (\rho-\eta)|B|,$
\item [(ii)] $deg(x, N(x')\cap B) \le (\rho+\eta)^2|B|.$
\end{itemize}
Then if $|\cD| > (1-5\eta)|A|^2/2,$ the pair $G[A, B]$ is $(16\eta)^{1/5}$-regular.
\end{theorem}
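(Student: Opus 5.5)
The plan is a second-moment (Cauchy--Schwarz) argument on centered neighbourhood vectors, the standard route for codegree criteria of regularity. Set $\ep=(16\eta)^{1/5}$. We may assume $\ep<1$, since otherwise there is nothing to prove. Fix $X\subseteq A$ and $Y\subseteq B$ with $|X|\ge \ep|A|$ and $|Y|\ge \ep|B|$; we must show $|d(X,Y)-\rho|\le \ep$. For each $x\in A$ let $a_x\in\{0,1\}^B$ be the indicator vector of $N(x)\cap B$, and put $b_x=a_x-\rho\mathbf{1}$. Then
$$\sum_{x\in X}\deg(x,Y)-\rho|X||Y|=\Big\langle \sum_{x\in X} b_x,\ \mathbf{1}_Y\Big\rangle ,$$
so by Cauchy--Schwarz
$$|d(X,Y)-\rho|\,|X||Y|\le \Big\|\sum_{x\in X}b_x\Big\|\sqrt{|Y|}.$$
The whole task is therefore to bound $\|\sum_{x\in X}b_x\|^2=\sum_{x,x'\in X}\langle b_x,b_{x'}\rangle$.

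We split this double sum (over ordered pairs) into three parts: the diagonal terms, the pairs in $\cD$, and the remaining pairs. Every inner product satisfies $|\langle b_x,b_{x'}\rangle|\le |B|$, since each coordinate of $b_x$ lies in $[-1,1]$. Hence the diagonal contributes at most $|X||B|$. Since $|\cD|>(1-5\eta)|A|^2/2$, there are at most about $5\eta|A|^2$ ordered non-diagonal pairs outside $\cD$, and they contribute at most $5\eta|A|^2|B|$ in total. For $\{x,x'\}\in\cD$ we expand
$$\langle b_x,b_{x'}\rangle=\deg(x,N(x')\cap B)-\rho\big(\deg(x,B)+\deg(x',B)\big)+\rho^2|B|.$$
Conditions (ii) and (i) of the theorem give
$$\langle b_x,b_{x'}\rangle\le \big((\rho+\eta)^2-2\rho(\rho-\eta)+\rho^2\big)|B|=(4\rho\eta+\eta^2)|B|\le 5\eta|B|.$$
Only an upper bound is needed, because we bound a squared norm from above. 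Summing the three parts,
$$\Big\|\sum_{x\in X}b_x\Big\|^2\le 5\eta|X|^2|B|+5\eta|A|^2|B|+|X||B|.$$

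Substituting this into the Cauchy--Schwarz bound and dividing by $|X|^2|Y|^2$ gives
$$(d(X,Y)-\rho)^2\le \frac{|B|}{|Y|}\Big(5\eta+\frac{5\eta|A|^2}{|X|^2}+\frac{1}{|X|}\Big).$$
Now we use $|X|\ge\ep|A|$, $|Y|\ge\ep|B|$ and $|A|\ge 2/\eta$; the last gives $1/|X|\le \eta/(2\ep)\le\eta/(2\ep^2)$. The right-hand side is then at most
$$\frac{1}{\ep}\Big(5\eta+\frac{5\eta}{\ep^2}+\frac{\eta}{2\ep^2}\Big)\le\frac{11\eta}{\ep^3}.$$
We want this to be at most $\ep^2$, that is $\ep^5\ge 11\eta$, and this holds since $\ep^5=16\eta$. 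Hence $|d(X,Y)-\rho|\le\ep$ for all admissible $X,Y$, which is $(16\eta)^{1/5}$-regularity.

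The main obstacle, I expect, is the bookkeeping of the non-$\cD$ pairs. These include pairs violating (i), so they must be handled only through the crude bound $|\langle b_x,b_{x'}\rangle|\le |B|$, and the ordered/unordered factor of $2$ must be tracked. Their contribution is measured against $|A|^2$ rather than $|X|^2$, which costs the factor $\ep^{-2}$. Together with the $\ep^{-1}$ from $|Y|$, this is exactly what forces the exponent $1/5$. The constant $16$ leaves enough slack for these losses.
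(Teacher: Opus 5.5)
Your proof is correct. The identity $\|\sum_{x\in X}b_x\|^2=\sum_{y\in B}(\deg(y,X)-\rho|X|)^2$, the bound $\langle b_x,b_{x'}\rangle\le(4\rho\eta+\eta^2)|B|$ for pairs in $\cD$, the crude bound $|B|$ for all other pairs, and Cauchy--Schwarz over $Y$ together give $(d(X,Y)-\rho)^2\le 10.5\,\eta/\ep^3<\ep^2$. The leftover slack also absorbs the extra $|A|$ ordered pairs if $\cD$ is read as allowing degenerate pairs $\{x,x\}$.

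The paper itself gives no proof of this criterion; it simply quotes it from Kohayakawa--R\"odl~\cite{KohRdl}. Your argument is the standard second-moment (codegree) route to such criteria, so there is no in-paper proof to compare it with.
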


\section{The vertex decomposition theorem}

The edge decomposition theorem below by the author~\cite{aus} is based upon a result  by Peng, R\"odl and Ruci\'nski~\cite{Peng}. 

\begin{theorem}\label{dekomp-epsz}
Let $G=(V, E)\in \cB_n$ with density $d_G,$ and let $0< d < 1$ and $0<\ep<1/2$ such that $n> \exp(50 \log (1/d) /\ep^2).$ 
Then there exist natural numbers $m=m(\ep, d)$ and $K=K(\varepsilon, d)$ such that the following holds: 
$E(G)$ can be written as the edge-disjoint union of bipartite graphs $H_1, \ldots, H_{K}\in \cB,$ and another, {\em exceptional} graph $H_0,$ such 
that for every $i\ge 1,$ $H_i$ is an $\varepsilon$-regular pair with $m_i\ge m$ vertices in both parts and density $d_{i}\ge (1-\ep/3)d,$
while $H_0$ has density less than $d.$ Furthermore, $$m \ge \frac{1}{2}d^{50/\varepsilon^2}n$$ and 
$$K \le 8 \frac{d_G}{d}\cdot d^{-100/\varepsilon^2}.$$
\end{theorem}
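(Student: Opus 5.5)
The plan is a greedy extraction argument: as long as the remaining graph is dense enough, it contains a large $\varepsilon$-regular pair of density close to $d$; we peel such pairs off one by one. Let $G_0=G$. At step $j$ we look at the current graph $G_j$, obtained from $G$ by deleting the edges of the pairs $H_1,\dots,H_j$ removed so far. If $G_j$ has density less than $d$, we stop and set $H_0=G_j$. Otherwise $d_{G_j}\ge d$, and we want a balanced pair $(X,Y)$ with $X\subseteq A$, $Y\subseteq B$ and $|X|=|Y|\ge \frac12 d^{50/\varepsilon^2}n$ such that $G_j[X,Y]$ is $\varepsilon$-regular with density at least $(1-\varepsilon/3)d$. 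We take $H_{j+1}=G_j[X,Y]$, remove its edges, and iterate.

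The key lemma is the Peng--R\"odl--Ruci\'nski density increment argument. If a balanced bipartite graph $(A,B)$ with $|A|=|B|=n'$ has density $\rho\ge (1-\varepsilon/3)d$ and is not $\varepsilon$-regular, then there are witness sets $X\subseteq A$, $Y\subseteq B$ of sizes at least $\varepsilon n'$ whose density deviates from $\rho$ by more than $\varepsilon$. If the deviation is upward, we pass to $(X,Y)$, trimming to equal sizes by a random or averaging choice so that the density does not drop much. If the deviation is downward, a standard averaging argument over the complementary parts gives a pair of size about $\varepsilon n'$ with density at least $\rho+\varepsilon^2/2$ or so. Either way we get a density increment of order $\varepsilon^2\rho$ at the cost of shrinking sizes by a factor of about $\varepsilon$. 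To get a multiplicative bound $d^{O(1/\varepsilon^2)}$ rather than $\varepsilon^{O(1/\varepsilon^2)}$, one should follow Peng--R\"odl--Ruci\'nski more closely. Instead of using irregular witnesses, compare the density with that of the densest subpair of size at least $\gamma$ times the current size. Choose $\gamma\approx d^{c}$ and track $\log(\text{density})$: each increment multiplies the density by $(1+\varepsilon^2/c')$ while shrinking by a factor $d^{c''}$. Since density cannot exceed $1$, there are at most $O(\log(1/d)/\varepsilon^2)$ rounds. The final pair therefore has side length at least $d^{50/\varepsilon^2}n/2$ and density at least $(1-\varepsilon/3)d$, because we start at density at least $d$ and never decrease below $(1-\varepsilon/3)$ times the starting density. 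The hypothesis $n>\exp(50\log(1/d)/\varepsilon^2)$ keeps all sets nonempty and large enough that integer rounding is harmless. I expect this step, obtaining the exponent $50/\varepsilon^2$ with base $d$ and controlling the density loss from balancing, to be the main obstacle. If needed I would simply cite the Peng--R\"odl--Ruci\'nski lemma as stated in \cite{Peng} and in \cite{aus}.

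The counting of $K$ is then simple. Each extracted $H_i$ has at least $(1-\varepsilon/3)d\,m^2\ge \frac{d}{2}\cdot\frac14 d^{100/\varepsilon^2}n^2$ edges. Since the $H_i$ are edge-disjoint and $e(G)=d_Gn^2$, we get
\[
K\le \frac{d_Gn^2}{\frac{d}{8}d^{100/\varepsilon^2}n^2}=8\frac{d_G}{d}d^{-100/\varepsilon^2}.
\]
The process terminates because each step removes edges. When it stops, $H_0$ has density less than $d$, as required. Setting $m=\lceil\frac12 d^{50/\varepsilon^2}n\rceil$ completes the argument.
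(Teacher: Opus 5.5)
Your reduction is sound and follows the route the paper points to. The paper does not prove this theorem; it imports it from \cite{aus}, where it rests on the lemma of Peng, R\"odl and Ruci\'nski \cite{Peng}. Your greedy peeling works as stated: while the remainder $G_j$ has density at least $d$, extract a balanced $\ep$-regular pair of density at least $(1-\ep/3)d$ with sides at least $\frac12 d^{50/\ep^2}n$, delete its edges, and repeat. Combined with the count $K\cdot \frac{d}{8}d^{100/\ep^2}n^2\le e(G)=d_Gn^2$, this gives exactly the stated bounds, including the constant $8$. Termination, edge-disjointness, balancedness and $d(H_0)<d$ are all handled correctly. When you apply the extraction step to $G_j$, note that $d_{G_j}\ge d$ gives $d_{G_j}^{50/\ep^2}\ge d^{50/\ep^2}$ and $(1-\ep/3)d_{G_j}\ge (1-\ep/3)d$.

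The gap is in the one non-routine ingredient, the extraction lemma: your sketch of it does not give what you claim. Suppose every round raises the density by a factor $1+\ep^2/c'$. Starting from $d$ and staying below $1$, there can then be up to $\Theta(\log(1/d)/\ep^2)$ rounds. If each round also shrinks the sides by $\gamma\approx d^{c''}$, the final side length is $d^{c''\cdot\Theta(\log(1/d)/\ep^2)}n=\exp\bigl(-\Theta(\log^2(1/d)/\ep^2)\bigr)n$. This is far below $\frac12 d^{50/\ep^2}n$ when $d$ is small.

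For a bound with base $d$, the factor $\log(1/d)$ may enter only once, through the number of rounds, so the per-round loss in size must not depend on $d$. Even then the scheme falls short. If you compare with the densest subpair at relative scale $\gamma$, you need $\gamma\le\ep$ to catch witnesses of relative size $\ep$. With increments $1+\ep^2/c'$ this gives only $d^{O(\log(1/\ep)/\ep^2)}$, still not $d^{50/\ep^2}$. In addition, the increment $1+\ep^2/c'$ in the downward-deviation case is asserted rather than derived.

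So as written your argument does not establish $m\ge\frac12 d^{50/\ep^2}n$. The proof is complete only if you invoke \cite{Peng} as a black box, adapted to balanced pairs of density at least $(1-\ep/3)d$ with sides at least $\frac12 d^{50/\ep^2}n$. That is exactly the input the paper's source \cite{aus} relies on.
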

\medskip

Observe, that if $d>d_G,$ then the theorem holds with $K=0.$ The theorem becomes useful when 
$d_G\ge d.$ It is also clear, that one needs $m\gg 1/\ep,$ which in turn implies $\ep =\Omega(\sqrt{\log (1/d)/\log n}).$ 

In Theorem~\ref{dekomp-epsz} the edge set is partitioned into $K=K(\ep, d)$ regular pairs, where $K(\ep, d)$ is a single exponential function
of $\ep$ and $d.$ Since the vertex parts of different regular pairs may intersect, this reduces the applicability of the above theorem. 
Still, one can use it under relatively general circumstances for certain kind of vertex decompositions. 
Our goal is to show that for graphs in which the vertex degrees
are linear (or slightly sublinear) in $n$ and the graph is almost degree-regular, Theorem~\ref{dekomp-epsz} can be applied for finding a {\em vertex partition}, which, although
one has to discard the majority of edges, proves to be useful for embedding problems. 

The main decomposition theorem of the paper is as follows.

\begin{theorem}\label{pontfelbontas}
Let $0< \ep <1/10$ be a number, and assume that $d$ is a real number such that $10 \ep ^{1/5}\le d< 1/3.$  
Assume further that $G=(V, E)\in \cB_n$ with bipartition $V=A\cup B$ and $r(1-\lambda)\le deg(v)\le r$ 
for every $v\in V,$ where $r\ge d^{1/3}n$ and $0\le \lambda\le d^4/2.$ 
If $n>\exp(150 \log (1/d)/\ep^2),$ then there exists a natural number $K\le 8 \frac{d_G}{d}\cdot d^{-100/\varepsilon^2}$ such that $G$ admits the following decomposition:

\begin{itemize}

\item [(i)] $A=A_1\cup \dots \cup A_K \cup A_0$ and $B=B_1\cup \dots \cup B_K \cup B_0,$
where $A_i\cap A_j=B_i\cap B_j=\emptyset,$ whenever $i\neq j,$

\item [(ii)] $|A_0 \cup B _0|< 8\sqrt[3]{d}n,$ 

\item [(iii)] $|A_i|, |B_i| \ge d^{101/\ep^2}\cdot \frac{n}{4}=\Omega\left(n^{1/3}(\log n)^{-1/2}\right)$ for every 
$1\le i\le K,$

\item [(iv)] $||A_i|-|B_i||\le 2\ep^2|A_i|$ for every $1\le i\le K,$

\item [(v)] the bipartite subgraphs $G[A_i, B_i]\subset G$ for $i\ge 1$ are all $(\ep', d_i)$-bundles, where 
$\ep'\le (64\ep)^{1/5}$ and $d_i\ge d-2\ep.$ 

\end{itemize}

\end{theorem}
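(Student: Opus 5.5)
We apply Theorem~\ref{dekomp-epsz} to $G$ with the given $\ep$ and $d$. Its size condition holds because $n>\exp(150\log(1/d)/\ep^2)$. This produces $\ep$-regular pairs $H_1,\dots,H_K$, where $H_i=(X_i,Y_i)$ with $X_i\subseteq A$, $Y_i\subseteq B$. Each pair satisfies $|X_i|=|Y_i|=m_i\ge m\ge \frac12 d^{50/\ep^2}n$, has density $d_i\ge(1-\ep/3)d$, and $K\le 8\frac{d_G}{d}d^{-100/\ep^2}$. The exceptional graph $H_0$ has density less than $d$.

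\textbf{Step 1: the pairs cover almost all of $G$.} Since $G$ is almost degree-regular with $r\ge d^{1/3}n$, we have $e(G)\ge(1-\lambda)rn$, while $e(H_0)<dn^2$. Hence $\sum_i e(H_i)\ge (1-\lambda)rn-dn^2$. For a vertex $v$, set $w(v)=\sum_{i: v\in V(H_i)} \deg_{H_i}(v)$. A vertex lying in few pairs, with total $H_i$-degree below $r-\sqrt[3]{d^2}\,n$ say, sends at least $\sqrt[3]{d^2}\,n-\lambda r$ edges into $H_0$. Since $e(H_0)<dn^2$, there are at most about $2\sqrt[3]{d}\,n$ such vertices. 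This is the source of the $8\sqrt[3]{d}n$ bound in (ii).

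\textbf{Step 2: random assignment.} Each vertex $v\in A$ independently picks one index $i$ with $v\in X_i$. It picks $i$ with probability $p_i(v)=\deg_{H_i}(v)/w(v)$, or uniformly among the pairs containing $v$; $B$ is treated the same way. Let $A_i'\subseteq X_i$ and $B_i'\subseteq Y_i$ be the chosen sets. The $A_i'$ are then automatically disjoint, which gives (i).

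To control the sizes we need the selection probabilities within a pair to be nearly equal. I would first use Lemma~\ref{bundle} or Corollary~\ref{kov} to pass to $(3\ep,d_i)$-bundles, so that $\deg_{H_i}(v)=(d_i\pm3\ep)m_i$. I would then argue that for a typical $v$ the number of pairs containing it is about $r/(\bar d\, m)$. The cleanest choice is $p_i(v)=\deg_{H_i}(v)/r$, with leftover probability assigned to the exceptional class. This makes $p_i(v)\approx d_im_i/r$, nearly constant over $v\in X_i\cup Y_i$. Thus $|A_i'|$ and $|B_i'|$ both concentrate around $d_im_i^2/r\ge d^{101/\ep^2}n/4$ by Theorem~\ref{Chernoff}, with relative error $\ep^2$. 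Here $\mathbb{E}\gg \ep^{-4}\log K$, which is exactly where the constant $150$ enters. This gives (iii) and (iv), after trimming to balance if needed. The vertices sent to the exceptional class number at most the Step 1 bound plus $\lambda n$, giving (ii).

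\textbf{Step 3: quasi-randomness of $G[A_i',B_i']$.} This is the main obstacle, because slicing fails for sets of relative size $p\approx d m_i/r$, which may be far below $\ep$. I would verify the condition of Theorem~\ref{KohRod} with $\eta\approx 4\ep$, using that $H_i$ is an $\ep$-regular bundle. In $H_i$, all but $O(\ep)m_i^2$ pairs $\{x,x'\}\subseteq X_i$ have codegree at most $(d_i+\ep)^2m_i$ and degrees at least $(d_i-\ep)m_i$, a standard consequence of regularity. Conditioned on $x,x'$ being chosen, the degrees into $B_i'$ and the codegrees are sums of independent indicators with means $\approx p\,d_i m_i$ and $p\,d_i^2m_i$, so Chernoff with a union bound over all $m_i^2$ pairs preserves these bounds within $\eta$. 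The fraction of good pairs among $A_i'$ then concentrates via a second-moment or Chernoff argument on the pair count. Theorem~\ref{KohRod} then yields $(64\ep)^{1/5}$-regularity, since $16\eta=64\ep$.

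Finally, concentration of each vertex's degree into $B_i'$ gives degrees $(d_i\pm\ep')|B_i'|$. Vertices violating this are moved to $A_0\cup B_0$, which costs only a negligible number of vertices. This yields bundles with $d_i\ge(1-\ep/3)d-\ep\ge d-2\ep$, proving (v).
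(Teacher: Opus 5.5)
Your plan is essentially the paper's proof. It uses Theorem~\ref{dekomp-epsz} and conversion to bundles, then an independent die roll for each vertex over the bundles containing it with the leftover mass sent to the exceptional class, then Chernoff plus Theorem~\ref{KohRod} on each random sub-pair, and finally a count of vertices of large $H_0$-degree for (ii). The one real deviation is your weight $\deg_{H_i}(v)/r$ in place of the paper's vertex-independent $p(H_i)=(d(H_i)-\ep)m_i/r$. Your weight makes $\sum_i p_i(v)\le 1$ and $\mathbb{E}|A_i'|=\mathbb{E}|B_i'|=e(H_i)/r$ automatic, but it samples $V(H_i)$ non-uniformly: the weights vary by a factor $1\pm O(\ep/d)$, so Lemma~\ref{velresz} must be redone with weights and the constant $64$ degrades slightly. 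The paper's choice instead gives a genuinely uniform random induced subgraph, at the price of its two-sided estimate on $\sum_{i\in S_v}p(H_i)$. One slip in (ii): vertices outside your Step~1 set become exceptional with probability up to $d^{2/3}n/r\le d^{1/3}$, not $\lambda$. Summing $p_0(v)=(r-\deg(v)+\deg(v,H_0))/r$ over all $v$ still gives $\mathbb{E}|A_0\cup B_0|\le d^4n+2e(H_0)/r$, well within the bound.
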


Before turning to the proof of the theorem, let us discuss some details and features of it.
The sets $A_1, \ldots, A_K$ and $B_1, \ldots, B_K$ are called the {\it non-exceptional clusters} of the decomposition,
and the sets $A_0$ and $B_0$ are the {\it exceptional clusters,} analogously to the decomposition of the Regularity lemma. 
Note, that the exceptional clusters, while can be made small by choosing $d$ to be small, could be much larger than the
non-exceptional clusters. Analogously to the Regularity lemma, sometimes we will call the union of the
$G[A_i, B_i]$ $(i\ge 1)$ bundles the {\it reduced graph} of $G.$ The vertices of this reduced graph are the clusters, the edges are the 
$G[A_i, B_i]$-bundles, these form a matching in the reduced graph.

We will use Theorem~\ref{dekomp-epsz} as a black box when proving Theorem~\ref{pontfelbontas}.  Note, that the bound for $n$ as a function of $\ep$ and $d$ is not the same in the two theorems,
in the second one the constant multiplier in the exponent is three times larger. We need it for conditions $(iii)$ and $(iv).$
Having said this, we also remark that the we did not optimize on the constants in the theorem. 

Theorem~\ref{dekomp-epsz} has an algorithmic version, Theorem 5.1 in~\cite{aus}. It allows one to formulate an algorithmic version of Theorem~\ref{pontfelbontas}, which can be proved similarly to Theorem~\ref{dekomp-epsz}.
However, $\ep$ is raised to a larger power in it, and here we do not focus on algorithmic applications, so we decided to work with Theorem~\ref{dekomp-epsz}. 

One can express a lower bound for $\ep$ and $d$ as functions of $n.$ Easy calculation, which we leave for the reader, shows the following fact.

\begin{fact}\label{epszd} 
We have $$\ep\ge\left(\frac{\log\log n}{\log n}\right)^{1/2} \quad \quad {\rm and \ } \quad \quad d\ge 10\left(\frac{\log\log n}{\log n}\right)^{1/10}.$$
 \end{fact}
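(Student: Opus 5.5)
The plan is to read off both inequalities from the two quantitative hypotheses of Theorem~\ref{pontfelbontas}: the size condition $n>\exp(150\log(1/d)/\ep^2)$ and the coupling $10\ep^{1/5}\le d<1/3$. No other input should be needed. I will treat the first as a lower bound on $\ep$ in terms of $d$ and $n$, and then feed the resulting bound on $\ep$ into the second to get the bound on $d$.

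First, I take logarithms in $n>\exp(150\log(1/d)/\ep^2)$ to get
$$\ep^2>\frac{150\log(1/d)}{\log n}.$$
Second, I bound $\log(1/d)$ from below and substitute, which gives a lower bound on $\ep$ of the form $\ep\ge (c/\log n)^{1/2}$ times whatever growth $\log(1/d)$ supplies. Third, I insert this into $d\ge 10\ep^{1/5}$, which immediately yields $d\ge 10(\cdot)^{1/10}$. This second claim is purely mechanical once the bound on $\ep$ is established, and it explains the exponent $1/10$ in the statement.

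The main obstacle is producing the $\log\log n$ factor, that is, showing $\log(1/d)\ge \frac{1}{150}\log\log n$. The trivial bound $d<1/3$ only gives $\log(1/d)>1$, and hence only $\ep\ge(150/\log n)^{1/2}$. The relation $d\ge 10\ep^{1/5}$ points the wrong way here: it bounds $\log(1/d)$ from above, by $\frac15\log(1/\ep)$.

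My first attempt is to use the requirement that the clusters in (iii) be nontrivial, $d^{101/\ep^2}n/4\ge 1$ and indeed $\gg 1/\ep$, together with the size condition, to pin down the admissible range of $(\ep,d)$. Having pinned it down, I would argue that in the regime where the theorem is meaningful, namely $d\to 0$, one can take $d\le(\log n)^{-1/150}$, which is exactly what $\log(1/d)\ge\frac{1}{150}\log\log n$ requires. Then $\ep^2\ge\log\log n/\log n$ follows.

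If this cannot be forced from the hypotheses alone, I would adjust the formulation. I would state the fact as a description of the smallest admissible parameters, under the extra assumption $d\le(\log n)^{-1/150}$, and note that without this assumption one still has the slightly stronger conclusions $\ep\ge(150/\log n)^{1/2}$ and $d\ge 10(150/\log n)^{1/10}$, which lack the $\log\log n$ factor.
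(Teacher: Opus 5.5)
Your proposal does not prove the Fact, and the two halves fare differently. The first inequality cannot be proved because it is false as stated. The second is true unconditionally, and you gave it up unnecessarily. The paper itself gives no proof (``easy calculation, which we leave for the reader'').

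Your ``first attempt'' is not an argument. Nontriviality of (iii), $d^{101/\ep^2}n/4\ge 1$, says $101\log(1/d)/\ep^2\le\log(n/4)$, which is again an upper bound on $\log(1/d)$. Restricting to $d\le(\log n)^{-1/150}$ is a choice, not a consequence of the hypotheses. Here is a counterexample to the first inequality. Take $d=3/10$ and $\ep=(200/\log n)^{1/2}$ with $\log\log n>200$. Then $\ep<(3/100)^5$, so $10\ep^{1/5}\le d<1/3$. Also $150\log(1/d)/\ep^2=\frac{150\log(10/3)}{200}\log n<\log n$. So every hypothesis of Theorem~\ref{pontfelbontas} holds, yet $\ep^2=200/\log n<\log\log n/\log n$.

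Your extra assumption $\log\log n\le 150\log(1/d)$ is the sharp one. The admissible range is $\ep^2\in(150\log(1/d)/\log n,(d/10)^{10}]$, and its lower end lies below $\log\log n/\log n$ exactly when your assumption fails. It holds automatically for $n\le\exp(3^{150})$. The paper only ever uses the unconditional consequence $\ep^2>150\log 3/\log n$, e.g.\ to get $\ep>1/\sqrt{\log k}$ in Lemma~\ref{velresz}. Note also that your fallback bounds are weaker, not stronger, than the stated ones once $\log\log n>150$.

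The missing idea concerns the $d$-half, which you should not derive from the $\ep$-bound. Instead, use the ``wrong-direction'' inequality in a bootstrap: a small $d$ makes $\log(1/d)$ large, which forces $\ep$ to be large, which forces $d\ge 10\ep^{1/5}$ to be large. Concretely, $(d/10)^{10}\ge\ep^2$ and $\ep^2\log n>150\log(1/d)$ give $g(d)>0$ for the increasing function $g(x)=(x/10)^{10}\log n-150\log(1/x)$. At $x_0=10(\log\log n/\log n)^{1/10}$ one computes $g(x_0)=-14\log\log n+15\log\log\log n+150\log 10$, which is negative once $\log\log n\ge 29$. The hypotheses force $\log\log n>39$, since $\ep<30^{-5}$ and $d<1/3$ give $\log n>150\log 3\cdot 30^{10}$. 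Hence $d>x_0$, the second inequality, with no extra assumption.

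As a by-product, consider the regime $d=10\ep^{1/5}$, presumably the one the author's calculation tacitly assumes. There the first inequality follows immediately from this one, since $10\ep^{1/5}>x_0$ gives $\ep>(\log\log n/\log n)^{1/2}$.
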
 

\medskip

Note that while the Regularity lemma gives a decomposition of the vertex set of a graph into clusters such that the bipartite subgraphs between {\it almost all} pairs of clusters are quasirandom, and, crucially, we still have {\it almost all} edges,
the above theorem gives a much weaker decomposition. In particular, the $G[A_i, B_i]$ bundles may have only a very small fraction of the edges of $G$ itself. 
Due to this fact, one would need extra work for providing connections between clusters that do not 
belong to the same ``super-edge'' when using Theorem~\ref{pontfelbontas} for embedding a large
connected subgraph (see in~\cite{Cs}), but this problem does not emerge in
the applications we consider in the second part of the paper.
 




\subsection{Proof of Theorem~\ref{pontfelbontas}}\label{biz}

The proof consists of a randomized algorithm and its analysis. We begin with applying Theorem~\ref{dekomp-epsz} for $G$ with parameters $\ep$ and $d \  (\ge 10\ep^{1/5}),$ assuming that 
$n>\exp(150 \log (1/d)/\ep^2).$ We denote by $H_1, \dots, H_K$ the $\ep$-regular pairs
in the decomposition we obtain this way, and $H_0$ denotes the {\it exceptional} bipartite subgraph
having density less than $d.$ Since $m \ge \frac{1}{2}d^{50/\varepsilon^2}n,$ using the lower 
bound for $n$ we have that $m\ge n^{2/3}/2.$ 

\smallskip

Next we transform every regular pair into a bundle: we apply Lemma~\ref{bundle} 
for every $H_i,$ $i\ge 1,$ and for simplicity we keep the notation for the $H_i$ bundles. By Corollary~\ref{kov} we do not increase the number of edges of $H_0$ by more than 
$d \cdot e(G) \le dn^2$ edges, so the new $H_0$ has density at most $2d.$

After this we assign a probability $p(H_i)$ to every non-exceptional bundle $H_i$ $(i\ge 1)$:
$$p(H_i)=\frac{(d(H_i)-\ep)m_i}{r}.$$ Observe, that if $v\in V(H_i),$ then $deg(v, H_i)/m_i\in d(H_i)\pm \ep,$ hence, if we randomly choose an edge incident to  $v,$ then the probability that this edge belongs
to $H_i,$ is about $p(H_i) $ 

For every $v\in V(G)$ we define a set: let $S_v=\{i: i\ge 1, v\in V(H_i)\}.$ Next we estimate the sum of the $p(H_i)$
probabilities for $i\in S_v.$ 

\begin{lemma}
For every $v\in V(G)$ we have $$1-\frac{deg(v, H_0)}{r}-d^4 \le \sum_{i: i\in S_v}p(H_i) \le 1-\frac{deg(v, H_0)}{r}.$$
\end{lemma}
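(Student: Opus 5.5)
The plan is to compare each $p(H_i)$ with the normalized degree $deg(v,H_i)/r$, and then sum over $i\in S_v$ using the fact that the edge sets of $H_0,H_1,\dots,H_K$ partition $E(G)$. After the bundle-trimming step, the discarded edges are moved into $H_0$, so the partition is preserved. Hence for every $v$
$$deg(v)=deg(v,H_0)+\sum_{i\in S_v}deg(v,H_i).$$
Combined with $(1-\lambda)r\le deg(v)\le r$, this gives
$$(1-\lambda)r-deg(v,H_0)\le \sum_{i\in S_v}deg(v,H_i)\le r-deg(v,H_0).$$

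\textbf{Upper bound.} By Lemma~\ref{bundle}, every vertex $v\in V(H_i)$ has degree $deg(v,H_i)=(d(H_i)\pm\ep)m_i$. This is the observation made right after the definition of $p(H_i)$. If we only trust the bundle parameter $3\ep$ from Lemma~\ref{bundle}, we replace $\ep$ by $3\ep$ throughout; this changes only the constants below. The lower half of the degree bound gives $p(H_i)=(d(H_i)-\ep)m_i/r\le deg(v,H_i)/r$. Summing over $i\in S_v$ and using the displayed upper bound yields $\sum_{i\in S_v}p(H_i)\le 1-deg(v,H_0)/r$.

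\textbf{Lower bound.} The upper half of the degree bound gives $p(H_i)\ge deg(v,H_i)/r-2\ep m_i/r$. Summing, we get
$$\sum_{i\in S_v}p(H_i)\ge 1-\lambda-\frac{deg(v,H_0)}{r}-\frac{2\ep}{r}\sum_{i\in S_v}m_i.$$
It remains to control $\sum_{i\in S_v} m_i$. This is the only genuinely non-trivial point, since the $H_i$ containing $v$ may be numerous, and the key is that each of them contributes many edges at $v$. Indeed, $d(H_i)\ge (1-\ep/3)d-\ep$ and $\ep\ll d$, so $deg(v,H_i)\ge (d(H_i)-\ep)m_i\ge (d/2)m_i$. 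Therefore
$$\sum_{i\in S_v}m_i\le \frac{2}{d}\sum_{i\in S_v}deg(v,H_i)\le \frac{2r}{d}.$$

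\textbf{Conclusion.} The error term is then at most $4\ep/d$ (or $12\ep/d$ with the cruder $3\ep$ degree bound). Using $\lambda\le d^4/2$, it suffices to have $12\ep/d\le d^4/2$, i.e. $\ep\le d^5/24$. This follows from the hypothesis $d\ge 10\ep^{1/5}$, which gives $d^5\ge 10^5\ep$. I expect this parameter bookkeeping, namely checking which degree tolerance the bundles actually satisfy and that $\ep\ll d^5$ absorbs it, to be the only place requiring care.
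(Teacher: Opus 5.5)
Your proof is correct and is essentially the paper's argument: the upper bound comes from $p(H_i)\le deg(v,H_i)/r$ summed over $S_v$, and the lower bound from the degree ceiling $(d(H_i)+\ep)m_i$, with the error $\frac{2\ep}{r}\sum_{i\in S_v}m_i$ absorbed via $\sum_{i\in S_v}m_i=O(r/d)$ and $\ep\ll d^5$. Your bookkeeping $deg(v,H_i)\ge (d/2)m_i$ is slightly cleaner than the paper's step $d^5m_i\le d^4d(H_i)m_i$.

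One quibble concerns your hedge about a $3\ep$ tolerance. The upper bound has no slack, so with a cruder degree tolerance you would also have to change the $-\ep$ in the definition of $p(H_i)$; it is not just a change of constants. The paper, like your main line, simply uses the $\pm\ep$ degree observation.
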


\noindent {\bf Proof:} We begin with the upper bound:
$$\sum_{i: i\in S_v}p(H_i)=\sum_{i: i\in S_v} \frac{(d(H_i)-\ep) m_i}{r}\le \sum_{i: i\in S_v} \frac{deg(v, H_i)}{r}\le
\frac{r-deg(v, H_0)}{r}=1-\frac{deg(v, H_0)}{r}.$$ 

Next we prove the lower bound. Recall, that $0\le \lambda \le d^4/2,$ and every vertex has degree in the
interval $[r(1-\lambda), r].$ 
Using that $(d(H_i)+\ep)m_i$ is an upper bound for the degrees of vertices in the bundle $H_i$ 
and $deg(v)\ge r-d^4r/2,$ we have 
$$\sum_{i: i\in S_v}(d(H_i)+\ep)m_i \ge r-\frac{d^4r}{2}-deg(v, H_0),$$ which implies that
$$\sum_{i: i\in S_v} (p(H_i)+\frac{2\ep m_i}{r})\ge 1- \frac{d^4}{2}- \frac{deg(v, H_0)}{r}.$$ 

Since $\ep \le d^5/10,$ we have
$$\sum_{i: i\in S_v}2\ep m_i\le \sum_{i: i\in S_v} d^5m_i/5 \le d^4\sum_{i: i\in S_v} d(H_i)m_i/5\le \frac{d^4r}{4}.$$
This implies that $$1-\frac{deg(v, H_0)}{r}-d^4\le \sum_{i: i\in S_v} p(H_i) \le 1-\frac{deg(v, H_0)}{r},$$
proving what is desired. \qedf

\medskip

Next for every $v\in V(G)$ we define $$p_0(v)=1-\sum_{i: i\in S_v}p(H_i).$$ 
The above lemma easily implies the following.

\smallskip

\begin{cor} For every $v\in V(G)$ we have
that $$\frac{deg(v, H_0)}{r}\le p_0(v)\le \frac{deg(v, H_0)}{r}+d^4.$$
\end{cor}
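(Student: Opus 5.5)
This corollary follows immediately from the preceding lemma by substituting the definition $p_0(v)=1-\sum_{i\in S_v}p(H_i)$. The plan is simply to rewrite the two-sided bound of the lemma in terms of $p_0(v)$.

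Fix $v\in V(G)$. The lemma gives
$$1-\frac{deg(v, H_0)}{r}-d^4 \le \sum_{i\in S_v}p(H_i) \le 1-\frac{deg(v, H_0)}{r}.$$
Multiply the chain by $-1$, which reverses the inequalities, and add $1$ throughout. The middle term becomes $1-\sum_{i\in S_v}p(H_i)=p_0(v)$. The right end becomes $\frac{deg(v,H_0)}{r}$, which is now the lower bound. The left end becomes $\frac{deg(v,H_0)}{r}+d^4$, which is now the upper bound. This is exactly the claimed inequality
$$\frac{deg(v, H_0)}{r}\le p_0(v)\le \frac{deg(v, H_0)}{r}+d^4.$$

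There is no real obstacle; the only thing to check is the direction of the inequalities after negation. Note also that the lower bound shows $p_0(v)\ge 0$, so $p_0(v)$ can serve as a probability in the subsequent randomized assignment.
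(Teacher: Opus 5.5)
Your proposal is correct and takes the same approach as the paper. The paper simply notes that the corollary follows from the preceding lemma via the definition $p_0(v)=1-\sum_{i\in S_v}p(H_i)$, and negating the two-sided bound and adding $1$, as you do, is exactly that step.
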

\medskip

Given a number $\psi \in (0,1)$, let $L=\{v\in V(G): deg(v, H_0)\ge \psi n\}.$ 

\begin{fact}\label{Lmeret} 
We have $$|L|\le \frac{2e(H_0)}{\psi n} \le \frac{4dn^2}{\psi n}= \frac{4dn}{\psi}.$$ 
\end{fact}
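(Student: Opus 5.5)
The plan is a double-counting argument on the edges of $H_0$, with a Markov-type step. Every vertex $v\in L$ has $deg(v, H_0)\ge \psi n$ by definition, so summing over $L$ gives
$$|L|\cdot \psi n\le \sum_{v\in L} deg(v, H_0)\le \sum_{v\in V(G)} deg(v, H_0)=2e(H_0).$$
Dividing by $\psi n$ gives the first inequality $|L|\le 2e(H_0)/(\psi n)$. Note that $H_0$ is bipartite with parts inside $A$ and $B$, so each edge is counted exactly twice in the degree sum; the factor $2$ is therefore correct, and even slightly generous, since each side separately would give $e(H_0)$.

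For the second inequality it suffices to show $e(H_0)\le 2dn^2$. This uses the bound established just before the definition of the $p(H_i)$. By Theorem~\ref{dekomp-epsz}, the original exceptional graph has bipartite density less than $d$, so it has fewer than $dn^2$ edges, since $G\in\cB_n$ has parts of size $n$. When each $H_i$ is replaced by a bundle via Lemma~\ref{bundle} and Corollary~\ref{kov}, the discarded edges are moved into $H_0$. Their number is at most $\sum_i d\cdot e(H_i)\le d\cdot e(G)\le dn^2$. Hence the updated $H_0$ satisfies $e(H_0)\le 2dn^2$, which is what ``density at most $2d$'' means here. Substituting this gives $2e(H_0)/(\psi n)\le 4dn^2/(\psi n)=4dn/\psi$.

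The only point needing care is the application of Corollary~\ref{kov}. Its hypotheses $2\sqrt{\ep}\le d(H_i)\le 1/2$ must hold for each pair, so that $e(H_i)-e(\text{bundle})\le d(H_i)e(H_i)$. If some $d(H_i)$ exceeds $d$, one should instead use the cruder loss bound $4\ep m_i^2$ from Lemma~\ref{bundle}. Summed over $i$, this is still at most $dn^2$, because $\ep\ll d$ and the $H_i$ are edge-disjoint, each with density at least $(1-\ep/3)d$. Apart from this bookkeeping, the fact is immediate.
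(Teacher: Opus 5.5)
Your argument is correct and is the reasoning the paper leaves implicit, since it gives no separate proof of this fact: you double count $|L|\psi n\le\sum_v deg(v,H_0)=2e(H_0)$ and combine this with the earlier observation that after the bundle clean-up $H_0$ has bipartite density at most $2d$, i.e.\ $e(H_0)\le 2dn^2$. Your extra bookkeeping is a valid justification of that earlier step, which the paper only asserts via Corollary~\ref{kov}: for pairs with $d(H_i)>d$ you use the loss bound $4\ep m_i^2$ from Lemma~\ref{bundle}, summed via $m_i^2\le e(H_i)/((1-\ep/3)d)$ and $\ep\le d^5/10^5$.
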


In order to obtain the partitions of $A$ and $B$ as stated in the theorem, we use a randomized algorithm. Given any vertex $v\in V(G),$ we ``roll
a dice'' which has $|S_v|+1$ faces, the numbers written on the faces are the elements of $S_v$ and $0.$
The probability of the outcomes are as follows:
$$P({\rm the \ outcome \ is \ } i\in S_v) =p(H_i),$$ and $$P({\rm the \ outcome \ is \ } 0) =p_0(v).$$
Note, that the number of faces and the probabilities of the outcomes may differ for different vertices.

\medskip

Next for every vertex we roll its dice, independently from other vertices. Say, that vertex $v\in A.$ If the outcome
is some $j\in S_v,$ then we put $v$ into $A_j.$ Otherwise, if the outcome is 0, $v$ will belong to $A_0.$ Analogous procedure is applied to the vertices in $B.$

After finishing this random distribution procedure for every vertex, we obtain the subgraphs 
$G[A_i, B_i]$ for every $1\le i\le K.$ Observe, that every vertex in $V(H_i)$ had the same
probability $p(H_i)$ for belonging to $A_i\cup B_i,$ that is, $G[A_i, B_i]$ is a random induced subgraph of $H_i.$ 
Moreover, $p(H_i)> dm/(2r) \gg n^{-1/3}(\log n)^{-1/2}$ for $i=1, \dots, K,$ using Fact~\ref{epszd}, since $m_i\ge n^{2/3}/2$ and $r\le n.$ We 
need the following lemma.

\begin{lemma}\label{velresz}
Assume that $F[X, Y]$ is an $(\ep, d)$-bundle with $|X|=|Y|=k,$ where $k$ is a sufficiently large integer, and $\ep, d$ are two real numbers such that $0<d<1/3$ and 
$1/\sqrt{\log k}<\ep \le d/10.$ 
Let $p$ be a real number
with $(30 \log^3 k)/k \le p\le 1.$ 
Assume that $X_R$ is a random subset of $X,$ such that every $x\in X$ belongs to $X_R$
with probability $p,$ independently from other vertices. We obtain $Y_R\subseteq Y$ analogously. 
Then with probability at least $1-1/k^{3},$ the induced subgraph $F[X_R, Y_R]\subset F[X, Y]$ is a
$((64\ep)^{1/5}, d-2\ep)$-bundle with density $\rho \in [d-2\ep, d+2\ep],$
moreover, $(1-\ep^2)pk\le |X_R|, |Y_R| \le (1+\ep^2)pk.$ 
\end{lemma}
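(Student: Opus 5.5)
We plan to verify the three properties (sizes, degrees, regularity) separately, each failing with probability at most $k^{-3}/3$, and then take a union bound. Regularity will come from the Kohayakawa--R\"odl criterion, Theorem~\ref{KohRod}. \emph{Sizes.} $|X_R|$ is a sum of $k$ independent indicators with mean $pk\ge 30\log^3 k$. Apply Theorem~\ref{Chernoff} with $\mu=\ep^2$. Since $\ep>1/\sqrt{\log k}$, the exponent is
$$\frac{\ep^4}{3}\,pk\ge \frac{30\log^3k}{3\log^2 k}=10\log k,$$
so the failure probability is at most $2k^{-10}$. The same holds for $Y_R$.

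\emph{Degrees.} Every $x\in X$ has $\deg_F(x,Y)=(d\pm\ep)k$. Then $\deg(x,Y_R)$ is binomial with mean $(d\pm\ep)pk\ge (d/2)\cdot 30\log^3k$. Chernoff with $\mu=\ep^2$ gives $\deg(x,Y_R)=(1\pm\ep^2)(d\pm\ep)pk$. Dividing by $|Y_R|=(1\pm\ep^2)pk$ gives $\deg(x,Y_R)=(d\pm 2\ep)|Y_R|$; the $\ep^2$ terms are absorbed because $\ep\le d/10<1/30$. The exponent is at least $\ep^4 d\,pk/6\ge \ep^5 \cdot 30\log^3 k\cdot\ldots$. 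This is the tight spot: with $\ep>1/\sqrt{\log k}$ we only get $\ep^4 pk\gtrsim 30\log k$, and the extra factor $d\ge 10\ep$ costs another $\ep$. I would fix this by using $\mu=\ep/2$ in place of $\ep^2$ for the degree bounds. That still yields the error $\pm 2\ep$ after absorbing the $\ep d/2$ loss, and the exponent becomes $\ep^2 d pk/12\gtrsim \ep^3\log^3k\ge \log^{3/2}k$. A union bound over the $2k$ vertices then gives failure probability $\ll k^{-3}$. Summing degrees also gives density $\rho\in[d-2\ep,d+2\ep]$, and the degree condition of a $(\cdot,d-2\ep)$-bundle holds with margin.

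\emph{Regularity (main obstacle).} Apply Theorem~\ref{KohRod} to $F[X_R,Y_R]$ with $\eta=4\ep$, so that $(16\eta)^{1/5}=(64\ep)^{1/5}$. Condition (i), $\deg(x,Y_R)\ge(\rho-\eta)|Y_R|$, holds for every $x$ by the degree step. For (ii) we need codegrees. Since $F$ is $\ep$-regular, all but at most $2\ep k^2$ pairs $\{x,x'\}\subset X$ satisfy $|N(x)\cap N(x')\cap Y|\le (d+\ep)^2k+\ep k\le (d+2\ep)^2k$. To see this, fix $x'$ with typical degree and apply Fact~\ref{reg} to the pair $(X,N(x')\cap Y)$; this requires $|N(x')|\ge \ep k$, which holds since $d\ge 10\ep$. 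Call these pairs \emph{good}. For a good pair, the codegree in $Y_R$ is binomial with mean at most $(d+2\ep)^2pk$. If the mean is smaller, stochastically dominate it by a binomial with exactly that mean, which is at least $d^2\cdot 30\log^3k$. Chernoff with deviation $\mu=\ep$ shows that, with failure probability $\exp(-\Omega(\ep^2 d^2\log^3 k))\le k^{-10}$, the codegree is at most $(1+\ep)(d+2\ep)^2pk\le(\rho+\eta)^2|Y_R|$. Here we use $\rho\ge d-2\ep$ and $|Y_R|\ge(1-\ep^2)pk$; the slack is $(d+2\ep)^2\to(d+2\ep)^2(1+\ep)(1+\ep^2)$ against $(d+2\ep)^2$, which is fine because the target $(\rho+4\ep)^2\ge(d+2\ep)^2$. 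A union bound over the at most $k^2$ good pairs keeps the failure probability below $k^{-3}$.

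It remains to count bad pairs inside $X_R$. The number of bad pairs with both ends in $X_R$ has expectation at most $2\ep k^2p^2$. It is a sum over the $k$ independent vertex indicators, with each vertex in at most $k$ bad pairs, so either Markov's inequality combined with a bounded-differences bound, or Chernoff applied to each bad-degree $|\{x'\in X_R:\{x,x'\}\text{ bad}\}|$, shows it is at most $3\ep|X_R|^2$ with probability $1-k^{-5}$. Then
$$|\cD|\ge \binom{|X_R|}{2}-3\ep|X_R|^2>(1-5\eta)\frac{|X_R|^2}{2},$$
and $|X_R|\ge pk/2\ge 2/\eta$. Theorem~\ref{KohRod} therefore gives $(64\ep)^{1/5}$-regularity. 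Combining this with the degree step shows that $F[X_R,Y_R]$ is a $((64\ep)^{1/5},d-2\ep)$-bundle, and the total failure probability is below $k^{-3}$.
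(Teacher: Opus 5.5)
Your route is the paper's: Chernoff for $|X_R|,|Y_R|$ and the degrees, then Theorem~\ref{KohRod} with $\eta=4\ep$. Condition (ii) is handled by a regularity-based codegree bound plus Chernoff in $Y_R$, and bad partners inside $X_R$ are counted with a per-vertex Chernoff bound. Switching to $\mu=\ep/2$ for the degrees and using stochastic domination are fine.

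However, your verification of condition (ii) fails as written, for two reasons.

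\begin{itemize}
\item The inequality $(d+\ep)^2k+\ep k\le(d+2\ep)^2k$ is false for every admissible $d,\ep$. Indeed $(d+2\ep)^2-(d+\ep)^2=\ep(2d+3\ep)<\ep$, since $d<1/3$ and $\ep<1/30$.
\item Even granting a good-pair threshold of $(d+2\ep)^2k$, the bound $(1+\ep)(d+2\ep)^2pk\le(\rho+4\ep)^2|Y_R|$ does not follow from $\rho\ge d-2\ep$ and $|Y_R|\ge(1-\ep^2)pk$. Those facts only give $(\rho+4\ep)^2|Y_R|\ge(1-\ep^2)(d+2\ep)^2pk$, which is smaller than your left-hand side.
\end{itemize}

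Your ``slack'' sentence has the comparison backwards. With threshold $(d+2\ep)^2$ there is no room for any Chernoff deviation, and enlarging $\eta$ would spoil the constant $(64\ep)^{1/5}$.

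The repair is to keep the factor that regularity actually gives instead of weakening $\ep(d+\ep)k$ to $\ep k$. Let $d_F\in[d-\ep,d+\ep]$ be the density of $F$. For each $x'$ we have $|N(x')|\ge(d-\ep)k\ge\ep k$. By the $\ep$-regularity of $(X,Y)$ itself, fewer than $\ep k$ vertices $x$ satisfy $\deg(x,N(x'))>(d_F+\ep)|N(x')|$. (Fact~\ref{reg} cannot be invoked literally for $(X,N(x'))$, since that pair need not be $\ep$-regular.) Moreover $(d_F+\ep)|N(x')|\le(d+2\ep)(d+\ep)k$.

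So call $\{x,x'\}$ bad if its codegree exceeds $(d+2\ep)(d+\ep)k$. This relation is symmetric, and every vertex has fewer than $\ep k$ bad partners.

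\begin{itemize}
\item For a good pair, Chernoff with deviation $\ep$ gives codegree in $Y_R$ at most $(1+\ep)(d+2\ep)(d+\ep)pk$.
\item The inequality $(1+\ep)(d+\ep)\le(1-\ep^2)(d+2\ep)$ reduces to $d+\ep+d\ep+2\ep^2\le 1$, which holds.
\item Hence the codegree is at most $(d+2\ep)^2|Y_R|\le(\rho+4\ep)^2|Y_R|$, as required.
\end{itemize}

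The per-vertex bound of fewer than $\ep k$ bad partners is also exactly what the Chernoff count of bad pairs in $X_R$ needs. Drop the Markov/bounded-differences alternative: when $p$ is near $30\log^3k/k$, the Lipschitz constants (of order $\ep k$) are far too large relative to the target deviation $\ep p^2k^2$. The resulting exponent is only about $p^4k\to0$, so it cannot give failure probability $k^{-5}$.
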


\noindent {\bf Proof:} Throughout the proof of the lemma we say that an event holds with high probability,
if it holds with probability at least $1-1/k^{5}.$ 

For estimating the cardinalities of $X_R$ and $Y_R$ we use Theorem~\ref{Chernoff} (the Chernoff bound) with parameter 
$\mu=\ep^2\ge 1/\log k.$
Note, that $\mathbb{E}|X_R|= \mathbb{E}|Y_R|=pk\ge 30 \log^3 k.$ Then we have 
$$P(||X_R|-pk|\ge \mu pk)\le 2e^{-\mu^2pk/3}\le 2e^{-10 \log k}<k^{-5}.$$ Similar bound holds for $|Y_R|.$

In order to prove that $F[X_R, Y_R]$ is $(64\ep)^{1/5}$-regular, we will check the conditions of Theorem~\ref{KohRod}.
We begin with determining high probability upper and lower bounds for the
density $\rho$ of $F[X_R, Y_R].$ Clearly, $$\frac{\min\{deg(x, Y_R): x\in X_R\}}{|Y_R|} \le \rho=\frac{\sum_{x\in X_R} deg(x, Y_R)}{|X_R|\cdot |Y_R|}\le \frac{\max\{deg(x, Y_R): x\in X_R\}}{|Y_R|}.$$ 

Since $F[X, Y]$ is an $(\ep, d)$-bundle, every vertex in $X$ has degree in the interval $[(d-\ep)|Y|, (d+\ep)|Y|].$ 
Applying the Chernoff bound we have $$(1-\ep)(d-\ep)p|Y| \le deg(x, Y_R) \le (1+\ep)(d+\ep)p|Y|$$ 
and $$(1-\ep)(d-\ep)p|X| \le deg(y, X_R) \le (1+\ep)(d+\ep)p|X|$$for every $x\in X$ and $y\in Y$ with
high probability. Hence, we have $$\frac{\min\{deg(x, Y_R): x\in X_R\}}{|Y_R|} \ge \frac{(d-\ep)(1-\ep)p|Y|}{(1+\ep)p|Y|} =
(d-\ep)\left(1-\frac{2\ep}{1+\ep}\right)\ge d-2\ep,$$
and $$\frac{\max\{deg(x, Y_R): x\in X_R\}}{|Y_R|} \le \frac{(d+\ep)(1+\ep)p|Y|}{(1-\ep)p|Y|} =(d+\ep)\left(1+\frac{2\ep}{1-\ep}\right)\le d+2\ep.$$

That is, $(d-2\ep)|Y_R|\le deg(x, Y_R)\le (d+2\ep)|Y_R|$ for every $x\in X,$ which implies that $d-2\ep\le \rho \le d+2\ep.$ Analogously, we have that $(d-2\ep)|X_R|\le deg(y, X_R)\le (d+2\ep)|X_R|$ for every $y\in Y$
with high probability.

Using the above lower and upper bounds for $\rho$ we can verify condition $(i)$ of Theorem~\ref{KohRod}: every $x\in X_R$ has degree at least $(d-2\ep)|Y_R|\ge (\rho-4\ep)|Y_R|$ with high probability.

For condition $(ii)$ of Theorem~\ref{KohRod} we need upper bounds for the co-degrees of the vast majority of vertex pairs in $X_R.$
We define a subset of $X$ for every $x\in X$: let $\widehat{X}(x)=\{x'\in X: deg(x, N(x'))> (d+\ep)^2|Y|\}.$
Since $F[X, Y]$ is an $(\ep, d)$-bundle, $|\widehat{X}(x)|\le \ep|X|$ for every $x\in X.$ By the Chernoff bound, 
$|X_R\cap \widehat{X}(x)|\le 2\ep |X_R|$ for every $x\in X$ with high probability.

Assume now, that $x, x' \in X_R$ such that $x'\in X_R-\widehat{X}(x).$ Then the expected number of neighbors
of $x$ in $N(x')\cap Y_R$ is at most $(d+\ep)^2p|Y|.$
The high probability lower bound $|Y_R|\ge (1-\ep^2)p|Y|$ implies that $\frac{1+\ep}{1-\ep^2}|Y_R|\ge (1+\ep)p|Y|.$
Since $\ep\le d/10$ and $d<1/3,$ we get that $(d+\ep)^2/(1-\ep^2)\le (d+2\ep)^2.$ Using the Chernoff bound we
have 
$$deg(x, N(x')\cap Y_R)\le (d+\ep)^2(1+\ep^2)p|Y|\le (d+2\ep)^2|Y_R|\le (\rho+4\ep)^2|Y_R|$$ with high probability.

Throughout the proof we have to control the probabilities of less than $k^2$ events, each having probability at least $1-1/k^{5}$: 
\begin{itemize}
\item the cardinalities of $X_R$ and $Y_R$ are close to their expectation,

\item every vertex in $X\cup Y$ has about the expected number of neighbors in $Y_R$ and $X_R,$ respectively, 

\item the number of common neighbors of the $\{x, x'\}$ pairs , where $x'\in X_R-\widehat{X}(x),$ are also close
to their expected values,

\item for every $x\in X$ the cardinality of $X_R\cap \widehat{X}(x)$ is close to its expectation. 
\end{itemize}

Hence, the intersection of these events has probability at least $1-k^2/k^{5}=1-1/k^3.$
Therefore, Theorem~\ref{KohRod} implies that $F[X_R, Y_R]$ is a $(64\ep)^{1/5}$-regular pair. Recall, that
$(d-2\ep)|Y_R|\le deg(x, Y_R)\le (d+2\ep)|Y_R|$ for every $x\in X$ and $(d-2\ep)|X_R|\le deg(y, X_R)\le (d+2\ep)|X_R|$ for every $y\in Y.$ Since $(64\ep)^{1/5}>4\ep,$ $F[X_R, Y_R]$ is a $((64\ep)^{1/5}, d-2\ep)$-bundle. 
\qedf

\medskip

We apply Lemma~\ref{velresz} for every bundle $H_1, \dots, H_K.$ Recall, that the size of the parts of
any $H_i$ are at least $n^{2/3}/2,$ this number plays the role of $k$ in Lemma~\ref{velresz}. Hence, the probability 
that a random subgraph of some $H_i$ bundle fails to be $((64\ep)^{1/5}, d-2\ep)$-bundle, is at most $2/n^2.$ There
are $K<n$ bundles, therefore, with probability at least $1-2/n$ the random subgraphs $G[A_1, B_1], \dots, G[A_K, B_K]$ are all $((64\ep)^{1/5}, d-2\ep)$-bundles. The bounds $|A_i|, |B_i| = \Omega(n^{1/3}/\sqrt{\log n})$ are also implied by Lemma~\ref{velresz},
since $m\ge n^{2/3}/2$ and $p\ge n^{-1/3}(\log n)^{-1/2}.$

\medskip

Finally, we estimate the cardinality of the set $A_0\cup B_0$ from above.  Recall the definition of
the set $L$: for a given parameter $\psi\in (0, 1)$ this set contains those vertices that have at least $\psi n$ edges incident to them from $H_0.$  Assume first, that $v\in V(G)-L.$ Then $p_0(v)< d^4+\psi n/r.$ Even in the worst
case, when all vertices of $L$ belong to $A_0\cup B_0,$ we have 
$$\mathbb{E}|A_0\cup B_0| < (d^4+\psi n/r)n +|L|\le d^4n+\frac{\psi n^2}{r}+\frac{4dn}{\psi},$$
here we used Fact~\ref{Lmeret}.
By Theorem~\ref{Chernoff}, we have that with high probability 
$$|A_0\cup B_0|\le 2d^4n+\frac{2\psi n^2}{r}+\frac{5dn}{\psi}.$$

Set $\psi=d^{2/3}.$ Using that $r\ge d^{1/3}n,$ we get that the following upper bound holds
with high probability:
$$|A_0\cup B_0|\le 2d^4n+2d^{1/3}n+5d^{1/3}n< 8d^{1/3}n.$$

With this we have finished the proof of Theorem~\ref{pontfelbontas}. \qedf



\section{Packing of bipartite graphs}

Given two graphs, $G$ and $H,$ an {\it $H$-packing} in $G$ is a collection of vertex disjoint copies of $H$
in $G.$
Clearly, the more vertex disjoint copies of $H$ we want to find in $G,$ the harder the task. 
If $G$ admits an $H$-packing which covers every vertex, we call it a {\it perfect $H$-packing} or 
an {\it $H$-factor.}

A special case of $H$-packings is the well-understood case of matchings, that is, when $H$ is a single edge. When $H$ is 
a clique on at least three vertices, the problem becomes considerably harder. If $G$ is an $n$-vertex graph, $r\ge 2$ is an
integer, $r$ divides $n,$ and $\delta(G)\ge (1-1/r)n,$ then $G$ has a perfect $K_r$-packing (the case $r=3$
is proved by Hajnal and Corr\'adi~\cite{CH}, the case of $r\ge 3$ is the Hajnal-Szemer\'edi theorem~\cite{HSz}).

Alon and Yuster~\cite{AY}, using the Regularity lemma, generalized the Hajnal-Szemerédi theorem for almost
perfect packings by a fixed $H,$ leaving out $o(n)$ vertices. Their theorem was improved by Koml\'os, S\'ark\"ozy and Szemer\'edi: they proved that for every $H$ there exists a constant $C_H$ such that if $G$ is an $n$-vertex graph, $n$ is divisible 
by $|V(H)|$ and the minimum degree of $G$ is at least $(1-1/\chi(H))n+C_H,$ then $G$ has a perfect $H$-packing. 
For the case when $\chi(H)\ge 3,$ 
K\"uhn and Osthus~\cite{KO09} refined the above result, and proved tight
bounds, depending on the chromatic number and the critical chromatic number of $H.$

It turns out that the case $\chi(H)=2$ is different, it received special attention. 
In~\cite{KO} K\"uhn and Osthus proved the theorem below
on packing with arbitrary bipartite graphs. 

For stating their theorem we need to introduce a notation: given numbers $a \ge b,$ we say that a 
graph $G$ is 
$(a \pm b)$-regular if its minimum degree is at least $a - b$ and its maximum degree is at most $a + b.$
The theorem below is Theorem 1.1. in~\cite{KO}. 

\begin{theorem}\label{KOthm} 
Given a bipartite graph $H$ and constants $0< c, \alpha \le 1,$ there exist positive
numbers $\gamma= \gamma(c, \alpha)$ and $n_0 = n_0(H, c, \alpha)$ such that every $(cn \pm \gamma n)$-regular graph $G$ of order $n \ge n_0$ has an $H$-packing which covers all but at most $\alpha n$ vertices of $G.$
 \end{theorem}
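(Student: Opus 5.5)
We will derive Theorem~\ref{KOthm} from the vertex decomposition Theorem~\ref{pontfelbontas} together with the Blow-up Lemma, possibly after first reducing to the bipartite setting. Since Theorem~\ref{pontfelbontas} is stated for balanced bipartite host graphs, the first step is to pass from the general $(cn\pm\gamma n)$-regular graph $G$ to a balanced bipartite approximately degree-regular graph. We will take a uniformly random balanced bipartition $V(G)=A\cup B$. By Theorem~\ref{Chernoff}, with high probability every vertex has $\frac{cn}{2}\pm 2\gamma n$ neighbours on the other side. The resulting $G'=G[A,B]\in\cB_{n/2}$ therefore satisfies $r(1-\lambda)\le deg(v)\le r$ with $r=(c/2+2\gamma)n$ and $\lambda=O(\gamma/c)$. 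We choose $\gamma$ small enough that $\lambda\le d^4/2$.

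Next we fix the parameters. Given $\alpha$ and $c$, choose $d$ so small that $8\sqrt[3]{d}\,n<\alpha n/2$. We also need $r\ge d^{1/3}|A|$; this holds once $d^{1/3}\le c$, which is true for small $d$ because $c$ is a constant. Then choose $\ep$ with $10\ep^{1/5}\le d$, and finally $\gamma\ll d$ and $n_0$ large, so that $n$ exceeds $\exp(150\log(1/d)/\ep^2)$ and the clusters are large compared with $v(H)$. Applying Theorem~\ref{pontfelbontas} to $G'$ yields disjoint clusters $A_i,B_i$ with $|A_0\cup B_0|<8\sqrt[3]{d}n<\alpha n/2$. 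Each $G[A_i,B_i]$ is an $(\ep',d_i)$-bundle, hence an $(\ep',d_i-\ep')$-super-regular pair, with $d_i\ge d-2\ep\gg\ep'$, and the part sizes differ by at most $2\ep^2|A_i|$.

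The third step packs copies of $H$ inside each bundle. Let $H$ have parts of sizes $h_1\le h_2$. Using disjoint copies of $H$ in both orientations, we can cover a balanced bipartite target. From each pair we remove at most $2\ep^2|A_i|+O(v(H))$ vertices so that the remaining parts $A_i',B_i'$ have sizes admitting an exact $H$-factor, for example both equal to a multiple of $h_1+h_2$. By the Slicing lemma (Fact~\ref{slicing}), after this small trimming the pair stays super-regular with slightly worse parameters. The Blow-up Lemma then embeds a perfect $H$-factor of $K_{|A_i'|,|B_i'|}$, which has bounded maximum degree since $H$ is fixed, into $G[A_i',B_i']$. The uncovered vertices total at most $|A_0\cup B_0|+\sum_i O(\ep^2|A_i|+v(H))$. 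This is less than $\alpha n$, because $K$ is bounded in terms of $\ep,d$ while $n\ge n_0$.

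The main obstacle is the parameter bookkeeping. The hypothesis $r\ge d^{1/3}n$ in Theorem~\ref{pontfelbontas} forces $d\le c^3$, while (ii) needs $8d^{1/3}\le\alpha/2$. Both are satisfied by taking $d$ small, but $\gamma$ must then satisfy $\gamma\ll c\,d^4$, so $\gamma$ depends on $c$ and $\alpha$ only, as required. A second point to verify is the random bipartition step. If the Chernoff deviations are not compatible with the one-sided form $[r(1-\lambda),r]$, we would instead delete a few edges at high-degree vertices to flatten the degrees, which is harmless since only $O(\gamma n^2)$ edges are removed.
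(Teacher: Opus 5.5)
Your argument is correct. The paper does not prove this statement itself: it is quoted from K\"uhn and Osthus, whose proof uses the Regularity Lemma. Within the paper it follows from Theorem~\ref{tetel1} by taking $\rho=cn$, $\gamma=cd^4/10$ and $d\le\min\{(c/3)^3,(\alpha/5)^3\}$.

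Your proof follows the skeleton of the proof of Theorem~\ref{tetel1}: a random balanced bipartition, then Theorem~\ref{pontfelbontas} applied to $G[A,B]$, then packing inside each bundle. In both, the exceptional set $A_0\cup B_0$ is the dominant loss. The genuine difference is the last step.

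\textbf{The paper's packing step.} The paper packs each bundle greedily with the balanced graph $H'$ made of two copies of $H$. While at least $\ep'|A_i|$ and $\ep'|B_i|$ vertices remain vacant, $\ep'$-regularity keeps the vacant pair's density above $d/3$, and the dependent random choice result, Theorem~\ref{seged1}, supplies another copy. This wastes fewer than $3\ep' m_i$ vertices per bundle, which is absorbed because $3\ep'<d$.

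\textbf{Your packing step.} You trim each bundle to equal part sizes divisible by $v(H)$ and use the Blow-up Lemma to get an exact $H$-factor there. This gives an almost perfect packing in every bundle, which is more than the statement needs, since the $\Theta(d^{1/3}n)$ exceptional set dominates anyway. It also has costs. $\Delta(H)$ must be bounded. In addition, $\ep$ must be chosen so that $2(64\ep)^{1/5}$ lies below the Blow-up Lemma threshold for $\delta\approx d/2$ and $\Delta(H)$. You only hint at this with ``$d_i\ge d-2\ep\gg\ep'$'' and should state it explicitly: the condition $10\ep^{1/5}\le d$ alone gives only $\ep'\le(64\ep)^{1/5}<d/4$. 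The extra dependence is harmless here, because $\gamma$ depends only on $d$, while $\ep$ and $n_0$ may depend on $H$.

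\textbf{What each route buys.} The paper's greedy route avoids the Blow-up Lemma entirely and yields the explicit bounds of Theorem~\ref{tetel1}, where $h$ may grow linearly in $n$ and $D$ is unbounded. Your route is closer in spirit to the paper's proof of Theorem~\ref{tetel2}, which does use the Blow-up Lemma.

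\textbf{A minor point.} Theorem~\ref{Chernoff} is stated for independent indicators. For a uniformly random balanced bipartition you therefore need the hypergeometric version. Alternatively, do as the paper does: flip independent coins and then move $O(\sqrt{n\log n})$ vertices to rebalance.
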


They also conjectured that if $G$ is $cn$-regular (so there is no ``slackness'' in the degrees), then there is an $H$-packing leaving at most a constant 
number of vertices in $G$ uncovered. This question was open for nearly twenty years, until recently, when Letzter, Methuku and Sudakov~\cite{LMS} proved the conjecture.

\begin{theorem}\label{LMSthm}
For every bipartite graph $H$ and every $0 < c \le 1,$ there are constants $C = C(H, c)$ and $n_0=n_0(H, c)$ such that every
$r$-regular graph $G$ of order $n\ge n_0,$ where $r\ge c n,$ has an $H$-packing that covers all but at 
most $C$ vertices of $G.$
\end{theorem}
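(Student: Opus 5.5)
The plan is to reduce to a single complete bipartite graph, split $G$ into boundedly many well-structured pieces, and in each piece combine an almost-perfect packing with an absorbing argument, leaving only $O_{H,c}(1)$ vertices uncovered.

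\textbf{Reduction to $K_{h,h}$.} If $H$ has parts of sizes $a$ and $b$, set $h=a+b$. Then $K_{h,h}$ has a perfect $H$-packing: place one copy of $H$ with $a$ vertices on the left and $b$ on the right, and a second copy with the roles swapped. So it suffices to find a $K_{h,h}$-packing of $G$ covering all but $C$ vertices. Note also that an $r$-regular $G$ with $r\ge cn$ has at most $1/c$ components, since each has more than $r$ vertices. We may therefore treat each component separately, provided each leaves only $O(1)$ vertices uncovered.

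\textbf{Structural split of a component.} I would invoke the robust-partition theory for dense regular graphs (Kühn, Lo, Osthus and Staden). For the fixed accuracy parameters it partitions the vertex set into a bounded number of pieces. Each piece is either a robust expander or a bipartite robust expander, and only $o(n^2)$ edges join different pieces. I would first handle the pieces separately and then repair the interfaces, which involve only $o(n)$ atypical vertices.

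\textbf{Packing inside a piece.} Inside each piece I would build the packing in three steps.
\begin{itemize}
\item[(a)] \emph{Absorbers.} Set aside a small random reservoir $R$ together with a family of vertex-disjoint absorbing structures. Each absorber is a small configuration that has a $K_{h,h}$-factor both with and without any given set of $h$ vertices from a suitable class. Such structures are found by supersaturation: robust expansion together with $r\ge cn$ gives $\Omega(n^{2h})$ copies of $K_{h,h}$ through typical configurations.
\item[(b)] \emph{Almost-perfect packing.} Cover almost all of the rest. Here Theorem~\ref{KOthm} applies directly, since $G$ minus a small set is still $(cn\pm\gamma n)$-regular. Alternatively, one can use Theorem~\ref{pontfelbontas}: blow up each bundle $G[A_i,B_i]$ with the Blow-up Lemma to pack $K_{h,h}$ almost perfectly, after discarding $O(1)$ vertices to fix divisibility.
\item[(c)] \emph{Absorption.} Swallow the leftover vertices and the exceptional vertices through the reservoir and the absorbers.
\end{itemize}
For a robust-expander piece there is no obstruction beyond divisibility by $2h$, so only a constant remainder survives.

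\textbf{Main obstacle: balancing bipartite pieces.} The hard part is the bipartite robust-expander pieces with sides $U$ and $W$. Any copy of $K_{h,h}$ using only $U$–$W$ edges covers equally many vertices of $U$ and $W$. So an imbalance $|U|-|W|$ must be corrected using copies that contain edges inside $U$ or inside $W$. Regularity is exactly what controls this. Counting edges gives
\[
r(|U|-|W|)=2e(U)-2e(W)+e(U,\overline{U\cup W})-e(W,\overline{U\cup W}),
\]
so the imbalance is at most $(2e(U)+2e(W)+\text{cross edges})/r$.

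I would then show that from these few "wrong" edges one can greedily pick vertex-disjoint copies of $K_{h,h}$. Each such copy has an unbalanced split between $U$ and $W$. A copy can use $\Theta(1)$ internal edges, since a vertex incident to an internal edge has about $r$ neighbours on the other side. Choosing about $|U|-|W|$ such copies, up to an additive constant, balances the piece.

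The delicate point is when the internal edges are spread thinly, for example as a matching. In that case each copy can correct the imbalance by only a bounded amount, and one must check that exact regularity still supplies enough disjoint correcting copies. Here I would use the edge-count identity above together with a matching argument in the auxiliary graph of internal edges. Once each piece is balanced up to $O(1)$, the absorbing step finishes the proof with $C=C(H,c)$.
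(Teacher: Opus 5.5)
The paper contains no proof of this statement. It is quoted from Letzter, Methuku and Sudakov \cite{LMS}, with only the remark that their proof combines the Regularity lemma with expander techniques, so your proposal has to stand on its own. Your reduction to $K_{h,h}$ and the bound on the number of components are fine. The overall architecture (robust partition, absorbers, near-perfect packing) is plausible. But the step that carries the whole difficulty of the K\"uhn--Osthus conjecture, balancing the bipartite pieces, is not proved, and two claims you make there are false.

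\emph{The imbalance need not come from internal edges.} You say the imbalance of a piece $(U,W)$ must be corrected by copies using edges inside $U$ or inside $W$. Your own identity shows the imbalance may come entirely from edges leaving $U\cup W$. Take two bipartite pieces $(U_1,W_1)$, $(U_2,W_2)$ with no internal edges, $|U_1|-|W_1|=|W_2|-|U_2|=\Delta$, and $r\Delta$ edges between $U_1$ and $W_2$. This is consistent with $r$-regularity: the whole graph is bipartite with parts $U_1\cup U_2$ and $W_1\cup W_2$. Here every correcting copy straddles both pieces and changes both imbalances at once. So balancing cannot be done piece by piece. It has to be organised globally over the pieces and cross edges, with robust-expander pieces as possible sinks, and you do not address this.

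\emph{One internal edge does not tilt a copy of $K_{h,h}$.} You say a copy can be tilted using $\Theta(1)$ internal edges because their endpoints have about $r$ neighbours across. Suppose a copy inside $U\cup W$ has $a$ vertices of one class and $b$ of the other in $U$. Then it spans $K_{a,b}$ in $G[U]$ and $K_{h-a,h-b}$ in $G[W]$, and it is unbalanced iff $a+b\neq h$. A copy tilted towards $U$ therefore needs some $K_{a,b}\subseteq G[U]$ with $a+b>h$, hence at least $h$ internal edges. The cheapest such configuration is a star $K_{1,h}$ whose leaves have $h-1$ common neighbours in $W$. If $G[U]$ is a matching and $h\ge 2$, no copy is tilted towards $U$ at all.

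What could rescue the argument is a quantitative trade-off that you do not carry out. If all internal degrees are below some $D_0(h,c)$, the identity forces $\Delta=O(D_0/c)$. A vertex of larger internal degree yields a correcting copy by K\H{o}v\'ari--S\'os--Tur\'an counting inside its internal neighbourhood. However, a greedy or ``matching'' selection does not give enough disjoint copies. For a maximal disjoint family with vertex set $S$, ignoring cross edges, you only get
\[
r\Delta\lesssim D_0|U|+2r|S\cap U|.
\]
A star-type copy corrects the imbalance by $2$ but puts $h+1$ vertices into $S\cap U$, so you are short by a factor of about $h$. You would need to choose copies whose correction is proportional to the internal and cross degree they destroy, and to do this simultaneously across pieces.

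\emph{The order of your steps undoes the balancing.} A near-perfect packing taken from Theorem~\ref{KOthm}, or from the bundles of Theorem~\ref{pontfelbontas} (whose random bipartition $A,B$ is unrelated to $U,W$), may contain copies that straddle pieces or use internal edges. This leaves an uncontrolled imbalance, up to order $\alpha n$, in the leftover of a bipartite piece. Absorbers in a bipartite piece can only take in balanced sets, and for divisibility they should absorb $2h$-sets rather than $h$-sets. So the packing step must use only copies respecting each piece's bipartition, and must come after balancing.
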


The proofs of Theorem~\ref{KOthm} and Theorem~\ref{LMSthm} use the Regularity lemma. Hence, the threshold numbers $n_0$ and the constant $C$ are of tower type. Moreover, these
proofs work only if $H$ has bounded degree. The proof of Theorem~\ref{LMSthm} is quite involved, and
in addition to the Regularity lemma it also uses other recent powerful techniques on expanders.

It turns out, that if $H$ is unbalanced, 
then one can find such an $H$-packing relatively easily, which leaves out only a constant number of vertices,
even if the degrees in $G$ are just approximately the same.
The following result was proved by Kühn and Osthus~\cite{KO}.

\begin{theorem}\label{KOthm2} Given a bipartite graph $H$ whose vertex classes have different size and a
constant $0 < c \le 1,$ there exist $\gamma=\gamma(H, C)> 0$ and $C = C(H, c)$ such that every 
$(cn \pm \gamma n)$-regular graph $G$ has an $H$-packing which covers all but at most $C$ vertices of $G.$
\end{theorem}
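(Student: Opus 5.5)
The statement is quoted from~\cite{KO}. The plan below reproves it with Theorem~\ref{pontfelbontas} in place of the Regularity lemma: an almost perfect packing inside bundles, followed by an absorption step that uses the imbalance of $H$. I expect the absorption step (the third paragraph) to be the main difficulty.

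Let $H$ have vertex classes of sizes $a<b$, and set $h=a+b$. First I would reduce to the bipartite setting of Theorem~\ref{pontfelbontas}. Split $V(G)$ uniformly at random into two halves $A\cup B$. By Theorem~\ref{Chernoff}, every vertex then has $(c/2\pm 2\gamma)n$ neighbours in the opposite half with high probability, so the bipartite graph $G[A,B]\in\cB_{n/2}$ satisfies the degree hypothesis with $r\approx cn/2$ and $\lambda\approx 8\gamma/c$. Choosing $\gamma$ small in terms of $d$ gives $\lambda\le d^4/2$. Before doing this, I would also set aside a small random reservoir $R$ of size $\beta n$. Every vertex keeps about $c\beta n$ neighbours in $R$, and the decomposition is applied to $G-R$, which is still approximately degree-regular.

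Theorem~\ref{pontfelbontas} then gives clusters $A_i,B_i$ such that each $G[A_i,B_i]$ is an $(\ep',d_i)$-bundle, hence super-regular, with $||A_i|-|B_i||\le 2\ep^2|A_i|$. The exceptional set satisfies $|A_0\cup B_0|<8d^{1/3}n$. Inside each bundle I would tile with copies of $H$, using both orientations: $x$ copies with the $a$-class in $A_i$ and $y$ copies with the $a$-class in $B_i$. Since $a\ne b$, the system $xa+yb=|A_i|$, $xb+ya=|B_i|$ has a nonnegative solution. The imbalance is tiny and $x\approx y$, so after deleting at most $h^2$ vertices per cluster the sizes become exactly divisible. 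The Blow-up Lemma (via Fact~\ref{slicing}, super-regularity survives these deletions) then embeds a perfect $H$-packing of each trimmed bundle. This covers all vertices except $R\cup A_0\cup B_0$ and at most $h^2K$ leftover vertices, where $K$ is bounded in terms of $\ep$ and $d$.

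The remaining, hardest step is to reduce this uncovered set $W$ (of size at most $9d^{1/3}n+\beta n$) to a constant, for which I would use the reservoir and the imbalance of $H$ again. For a vertex $w\in W\setminus R$, take $w$ as a vertex of the $b$-class of a copy of $H$. Its remaining $a-1+b$ vertices should come from $R\cup W$: partners in the $a$-class are taken from the $\ge c\beta n/2$ neighbours of $w$ in $R$, and the rest of the $b$-class comes from their common neighbourhoods. Such a copy exists by a K\H{o}v\'ari--S\'os--Tur\'an type count, since $G[R]$ has positive density by the degree condition. Handling the vertices of $W$ greedily fails as $R$ gets used up. So instead I would first treat the vertices of $A_0\cup B_0$ greedily, while $R$ is still almost untouched; this is possible because $8d^{1/3}n\ll\beta n$ once $d$ is chosen small relative to $\beta$. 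Then I would apply Theorem~\ref{KOthm}-style reasoning, or simply Theorem~\ref{pontfelbontas} again, to the remaining part of $R$, which is still approximately degree-regular because it was chosen randomly. Iterating with geometrically shrinking reservoirs, the uncovered set shrinks until it has constant size $C(H,c)$. The delicate points are keeping approximate degree-regularity in each nested reservoir and controlling the divisibility defects, which the two orientations of the unbalanced $H$ allow one to fix. I expect this iterative absorption to be the main obstacle.
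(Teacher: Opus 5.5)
Your first two steps are sound: random halving, a random reservoir $R$, Theorem~\ref{pontfelbontas} on $G-R$, tiling each bundle with copies of $H$ in both orientations after trimming $O(h^2)$ vertices, and the Blow-up Lemma. For the last of these, take $\ep$ much smaller than the bare constraint $d\ge 10\ep^{1/5}$ allows.

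The gap is in the third step. After you greedily absorb $A_0\cup B_0$, the rest $R'$ of the reservoir is \emph{not} approximately degree-regular ``because it was chosen randomly''. You have removed about $(h-1)|A_0\cup B_0|$ vertices of $R$, which is of order $hd^{1/3}n$. They were chosen inside the neighbourhoods of particular vertices, and nothing prevents all of them from lying in $N(v)$ for some $v\in R'$. Such a $v$ loses a fraction of order $hd^{1/3}/(c\beta)$ of its roughly $c\beta n$ neighbours in $R$. Theorem~\ref{pontfelbontas} applied to $R'$ with parameter $d'$ tolerates only a fraction $d'^4/2$. With $d'=d$ this would force $\beta>1$; in general it forces $d'\ge d^{1/12}$. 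The next reservoir then needs relative size $\gg hd'^{1/3}/c$, and so on, so $d_k\ge d^{12^{-k}}$. The parameters therefore leave the admissible range after $O(\log\log(1/d))$ rounds. Each round shrinks the uncovered set only by a factor $\beta_k\gg hd^{1/3}/c$, so what remains at the end is still linear in $n$. Reaching constant size would need $\Theta(\log n)$ rounds, since every application of Theorem~\ref{pontfelbontas} needs at least $\exp(150\log(1/d_k)/\ep_k^2)$ vertices. As written, the scheme yields at best an $\alpha n$ leftover, i.e.\ Theorem~\ref{KOthm}, not a constant. The ``delicate points'' you list at the end are exactly where it breaks.

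The paper only quotes this statement from~\cite{KO}, where it is proved with the Regularity Lemma. Its own counterpart is Theorem~\ref{tetel2}, and that proof shows the missing idea: no reservoir is needed, because the exceptional vertices can be pushed into the bundles themselves.
\begin{itemize}
\item Since $|A_0\cup B_0|<8d^{1/3}n\ll\rho$, for every exceptional vertex $v$ the clusters to which it can be assigned have total size more than $\rho/3$. Here $v$ can be assigned to $A_i$ if $\deg(v,B_i)\ge\rho|B_i|/(3n)$, and symmetrically for $B_i$.
\item The assignment can be spread so that a cluster of size $m$ receives at most $12d^{2/9}m$ exceptional vertices.
\item Inside each pair $(S,T)$, the assigned vertices $S_0$ are covered greedily by copies of $H$. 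The larger class goes into $S_0$ and the smaller into a random subset $T_1\subset T$, using the common-neighbourhood argument of Claim~\ref{moho}. This continues until only $O(h/\delta^h)$ of them remain.
\item What is left is still super-regular. The size imbalance is removed with copies of $H$ placed with the larger class on the larger side, which is your two-orientation idea.
\item The Blow-up Lemma then covers all but fewer than $h$ vertices of each pair.
\end{itemize}
This gives a leftover of at most $Kh$ in one pass, with no iteration and no need to re-establish degree-regularity on a shrinking subset.
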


Since the above theorem, similarly to the previous ones, was proved using the Regularity lemma, the constant $C$ has a tower-type dependence on $H$ and $c.$ 

Our first result on bipartite $H$-packing is the following.

\begin{theorem}\label{tetel1}
Let $\ep, d$ be real numbers such that $0< \ep <1/10$ and $20 \ep ^{1/5}\le d< 1/3.$  
Assume that $G=(V, E)$ is an $(1\pm \frac{d^4}{10})\rho$-regular graph 
on $n$ vertices with $\rho\ge 3d^{1/3}n.$ Assume further that $H$ is a bipartite graph on $h$ vertices with 
maximum degree $D\ge 1.$ 
If 
\begin{itemize}
\item $n>2\exp(150 \log (1/d)/\ep^2)$ and 
\item $h\le \frac{\ep^{1/5}}{16D} \exp(-(D+101/\ep^2) \log (1/d)) 3^{-D} n,$ 
\end{itemize}
then $G$ has an $H$-packing, which covers all but at most $5d^{1/3}n$ vertices of $G.$
\end{theorem}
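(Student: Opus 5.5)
The plan is to reduce to Theorem~\ref{pontfelbontas} by passing to a random balanced bipartite subgraph of $G$. Then, inside each bundle of the resulting decomposition, we greedily embed vertex disjoint copies of $H$ until only a small fraction of every cluster remains uncovered.

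\emph{Step 1 (bipartization).} Discarding one vertex if $n$ is odd, split $V$ uniformly at random into two halves $A$ and $B$ of size $n'=\lfloor n/2\rfloor$. For a fixed $v$, $\deg(v,B)$ (or $\deg(v,A)$) is hypergeometric with mean about $\deg(v)/2\ge \rho/3$. By Theorem~\ref{Chernoff} (the hypergeometric version is equally standard) and a union bound over the $n$ vertices, with positive probability every vertex has
\[
\deg_{G[A,B]}(v)=\Bigl(1\pm \tfrac{d^4}{10}\Bigr)\frac{\rho}{2}\Bigl(1\pm \frac{d^4}{20}\Bigr),
\]
since $\rho\ge 3d^{1/3}n$ and $n$ is huge. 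Fix such a split and let $G'=G[A,B]\in\cB_{n'}$. Let $r$ be its maximum degree. Then every degree lies in $[r(1-\lambda),r]$ with $\lambda\le d^4/2$, and $r\ge (1-d^4/5)\rho/2\ge d^{1/3}n'$. Also $n'>\exp(150\log(1/d)/\ep^2)$ by the first hypothesis. Hence Theorem~\ref{pontfelbontas} applies to $G'$. It gives clusters $A_i,B_i$ ($1\le i\le K$) such that each $G[A_i,B_i]$ is an $(\ep',d_i)$-bundle with $\ep'\le(64\ep)^{1/5}$ and $d_i\ge d-2\ep$. The clusters satisfy $|A_i|,|B_i|\ge d^{101/\ep^2}n'/4$ and $||A_i|-|B_i||\le 2\ep^2|A_i|$, and the exceptional set satisfies $|A_0\cup B_0|<8d^{1/3}n'\le 4d^{1/3}n$. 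Note that $d\ge 20\ep^{1/5}$ gives $d_i\ge d/2\gg \ep'\approx 2.3\,\ep^{1/5}$.

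\emph{Step 2 (greedy packing inside one bundle).} Fix $i$ and write $m_i=\min(|A_i|,|B_i|)$ and $\delta=d_i-\ep'$. Let $H$ have parts $P,Q$. We embed copies of $H$ one at a time into the still uncovered parts $A_i',B_i'$, alternating the orientation ($P\to A_i$ on odd steps, $P\to B_i$ on even steps) so that the two sides are used up at the same rate up to $h$. We stop once $|A_i'|$ or $|B_i'|$ drops below $\beta m_i$, where $\beta\approx\ep^{1/5}$. While both uncovered sets have size at least $\beta m_i\ge\ep' |A_i|$, the pair $(A_i',B_i')$ is $\max\{\ep'/\beta,2\ep'\}$-regular with density at least $d_i-\ep'$ by Fact~\ref{slicing}. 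A single copy of $H$ is then embedded by the standard vertex-by-vertex procedure. Order $V(H)$ and keep for each unembedded vertex a candidate set, initially its whole side. When a vertex is embedded, every candidate set of an unembedded neighbour is intersected with the neighbourhood of the image. The image is chosen to be a vertex that is typical with respect to all current candidate sets of its unembedded neighbours: at most $D$ of them, each excluding at most $\ep''$ of the side. Each candidate set therefore shrinks by at most $D$ factors $(\delta-\ep'')$, so it stays of size at least $(\delta-\ep'')^D\beta m_i$. Since $\delta\ge d/3$, this is at least $(d/3)^D\beta m_i$. Every candidate set must also keep size at least $h$ (to avoid images already used in this copy) and at least $\ep''$ times the side (so that regularity still applies). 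The second assumption on $h$, namely
\[
h\le \frac{\ep^{1/5}}{16D}\,d^{D}\,d^{101/\ep^2}\,3^{-D}n,
\]
is exactly tailored to guarantee $h\ll (d/3)^D\beta m_i$ with $m_i\ge d^{101/\ep^2}n/8$. The factor $1/D$ absorbs the at most $D$ typicality exclusions per step.

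\emph{Step 3 (counting).} When the process stops in bundle $i$, the uncovered part of $A_i\cup B_i$ is at most $2\beta m_i+2\ep^2|A_i|+h=O(\ep^{1/5})(|A_i|+|B_i|)$. Here the imbalance is controlled by (iv) and by the alternating orientation. Summing over $i$, the total uncovered amount is at most $4d^{1/3}n+O(\ep^{1/5})n+1$, which is below $5d^{1/3}n$ because $\ep^{1/5}\le d/20\ll d^{1/3}$. I expect the main obstacle to be Step 2. A fully explicit greedy embedding lemma with constants matching the stated bound on $h$ is needed, in particular a careful choice of $\beta$ relative to $\ep'$ so that slicing stays valid down to the stopping point. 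The typicality/exclusion count per embedded vertex is what produces the $3^{-D}d^D/(16D)$ factor.
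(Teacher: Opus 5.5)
Your Steps 1 and 3 are essentially what the paper does: the random halving, the application of Theorem~\ref{pontfelbontas}, and the final count. Alternating the orientation of $H$ is a fine substitute for the paper's balanced double copy $H'$. Step 2, however, has a genuine gap.

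A vertex-by-vertex embedding driven by regularity only controls neighbourhoods inside candidate sets that are at least a regularity-parameter fraction of their side. The atypical vertices it must avoid also form a regularity-parameter fraction. Your candidate sets shrink by a factor of about $d_i-\ep'$ for each embedded neighbour. So the argument needs the regularity parameter to be small compared with roughly $(d/3)^{D-1}\beta$, and none of the hypotheses gives this.

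\begin{itemize}
\item The theorem allows $D$ to be unbounded: for fixed $\ep,d$, the condition on $h$ permits $D$ of order $\log n$. Then $(d/3)^{D-1}$ is far below $\ep'$.
\item Even for fixed $D\ge 3$, take $d=20\ep^{1/5}$ and $\ep'$ as large as $(64\ep)^{1/5}\approx 2.3\,\ep^{1/5}$. Then $d^{D-1}\le 400\,\ep^{2/5}<\ep'$ for small $\ep$, so the embedding breaks down whatever $\beta\le 1$ you pick.
\end{itemize}

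The hypothesis on $h$ cannot rescue this. It only bounds $h/n$ and says nothing about $\ep$ versus $d^{D}$. Your remark that the factor $\frac{1}{16D}\,d^D3^{-D}$ ``absorbs the typicality exclusions'' conflates a condition on $h$ with a condition on $\ep'$. There is also a smaller problem: $\beta\approx\ep^{1/5}$ is below $\ep'\approx 2.3\,\ep^{1/5}$, so Fact~\ref{slicing} does not apply at your stopping point. Moreover, slicing down to $\beta$ close to $\ep'$ yields a parameter $\ep'/\beta$ close to $1$.

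The missing idea is to use regularity only to certify density, and to embed $H$ with a tool that needs nothing but density. The paper argues as follows.
\begin{itemize}
\item As long as the vacant parts satisfy $|A_i'|\ge\ep'|A_i|$ and $|B_i'|\ge\ep'|B_i|$, the $\ep'$-regularity of $G[A_i,B_i]$ gives $G[A_i',B_i']$ density larger than $d/3$.
\item The dependent random choice theorem (Theorem~\ref{seged1}) says every graph on $N\ge 8D\delta^{-D}h$ vertices with density $\delta$ contains every bipartite graph on $h$ vertices with maximum degree $D$.
\item So a copy of $H'$ is found once $2\ep'm_i\ge 8D(d/3)^{-D}\cdot 2h$. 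With $m_i\ge d^{101/\ep^2}n/8$, this is exactly what the hypothesis on $h$ is designed to give.
\end{itemize}
This is where the $\frac{1}{16D}(d/3)^D$ factor comes from, and it places no restriction linking $D$ to $\ep$. It also lets you stop only when fewer than $\ep' m_i$ vacant vertices remain. The leftover in non-exceptional clusters is then $O(\ep')n<d^{1/3}n$.
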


We emphasize, that in Theorem~\ref{tetel1} the graph $H$ may have unbounded size and degree, and $G$ is allowed to have vanishing density. By Fact~\ref{epszd}, one may choose $\rho=o(n)$ if $\rho\ge 3d^{1/3}n$   
and $d=\Omega((\log \log n/\log n)^{1/30}).$ Besides, $n$ can be single-exponential in a polynomial of 
$1/\ep,$
unlike in the Regularity lemma. Hence, Theorem~\ref{tetel1} extends Theorem~\ref{KOthm} in several ways,
and even Theorem~\ref{LMSthm} in that it works for graphs of ``practical size'', and the size of $H$ 
may grow with $n.$ 

\medskip

We also consider the bipartite packing problem of unbalanced bipartite graphs, and the closely related
packing by subdivison problem. Given a graph $H,$ we obtain a {\it subdivision} of it by replacing edges
of $H$ with internally vertex-disjoint paths. The resulting graph is denoted by $TH,$ as $H$ is a topological
minor of it. 

We focus on the special case of the 1-subdivision, when {\it every edge} of $H$ is replaced by internally
vertex-disjoint paths of length 2. It is easy to see, that the 1-subdivision of any graph is bipartite, since 1-subdivisions cannot contain odd cycles.

In 2002 Verstr\"aete~\cite{V} conjectured that for every graph $H$ and $\eta>0$ there exists a threshold number
$r_0$ such that if $G$ is an $n$-vertex, $r$-regular graph with $r\ge r_0,$ then it contains a $TH$-packing
that leaves at most $\eta n$ vertices uncovered in $G.$ 
Recently this conjecture was proved by Montgomery, Petrova, Ranganathan and Tan~\cite{MPRT}. Note, that $r_0$ does not depend on $n,$ so it is not surprising that the $\eta n$ ``error'' is unavoidable. 
Theorem~\ref{KOthm2} for packing with unbalanced bipartite graphs, as observed by Kühn and Osthus, implies a stronger bound for packing by subdivisions, only leaving out a (very large) constant number of vertices. However, this works only for dense graphs, and the mentioned constant has a tower type bound.

Our theorem for packing with an unbalanced bipartite graph is as follows.

\begin{theorem}\label{tetel2}
Let $H$ be a bipartite graph with vertex parts $X$ and $Y$ such that $|X|>|Y|.$ Set $h=|X|+|Y|.$ 
Assume that $\ep, \rho$ and $d$ are constants and $n$ is
number such that following are satisfied:  
\begin{itemize}
\item $0< \ep <1/10,$ 
\item $10 \ep ^{1/5}\le d< 1/100,$ 
\item $\rho\ge 10 d^{1/9}n.$
\item $n>\exp(150 \log (1/d)/\ep^2).$  

\end{itemize}

Assume further that $G=(V, E)$ is an $(1\pm \frac{d^4}{10})\rho$-regular graph 
on $n$ vertices. Then there exists a number 
$C=C(H, \ep, d)\le \frac{8h}{d}\cdot d^{-100/\varepsilon^2}$ such that $G$ has an $H$-packing, leaving out at most $C$ vertices.
\end{theorem}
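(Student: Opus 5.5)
The plan is to imitate the Kühn--Osthus strategy for unbalanced $H$, but with the cluster structure supplied by Theorem~\ref{pontfelbontas} in place of the Regularity Lemma. This gives $C$ of order $h\cdot K$ rather than a tower.

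\textbf{Step 1: bipartite reduction.} Take a random balanced bipartition of $V$ into $A\cup B$ (if $n$ is odd, set one vertex aside and add $1$ to $C$). By the Chernoff bound (Theorem~\ref{Chernoff}), with high probability every vertex has $(1\pm d^4/5)\rho/2$ neighbours across the cut. Hence $G'=G[A,B]\in\cB_{n/2}$ satisfies the hypotheses of Theorem~\ref{pontfelbontas} with $r=(1+d^4/5)\rho/2$ and $\lambda\le d^4/2$. We also need $r\ge d^{1/3}(n/2)$, which holds since $\rho\ge 10d^{1/9}n$. Applying Theorem~\ref{pontfelbontas} gives bundles $G[A_i,B_i]$, $i\le K$, and an exceptional set $A_0\cup B_0$ of size less than $8d^{1/3}n$.

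\textbf{Step 2: absorbing the exceptional vertices.} The exceptional vertices must now be absorbed; this is the step I expect to be the main obstacle. Since $\rho\ge 10d^{1/9}n\gg 8d^{1/3}n$, every vertex has far more neighbours than there are exceptional vertices. So each $v\in A_0\cup B_0$ has $\Omega(d^{1/9}n)$ neighbours in non-exceptional clusters. By averaging, there is a cluster $A_i$ or $B_i$ in which $v$ has at least a $d^{1/9}$-fraction of neighbours; more precisely, I would use the degree-regularity to compare $v$'s neighbourhood with the cluster sizes.

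Assign $v$ to such a cluster pair, greedily. The allocation must keep the number of extra vertices per bundle at most a tiny fraction of $|A_i|$, which requires a load-balancing (flow / Hall-type) argument. Roughly, the total exceptional mass $8d^{1/3}n$ is small compared with the total neighbour mass $d^{1/9}n$ of each vertex. Each such $v$ is then covered by a copy of $H$ in which $v$ plays a vertex of $X$ whose $\le D$ neighbours lie in the super-regular pair; the embedding uses the Blow-up Lemma, or simply greedy embedding, since $H$ is fixed.

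\textbf{Step 3: fixing divisibility inside each bundle.} After these copies are removed, each bundle remains super-regular by the Slicing lemma (Fact~\ref{slicing}), with $||A_i|-|B_i||\le 3\ep^2|A_i|$ by Theorem~\ref{pontfelbontas}(iv). Because $|X|>|Y|$, we can combine copies of $H$ oriented $X\to A_i$ and copies oriented $X\to B_i$. Choosing the numbers $a,b$ of the two orientations with $a|X|+b|Y|=|A_i|$ and $a|Y|+b|X|=|B_i|$ up to rounding, the imbalance can be corrected exactly. This is solvable with $a,b\ge 0$ because the imbalance ratio is at most $3\ep^2\ll (|X|-|Y|)/h$. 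The rounding leaves fewer than $2h$ vertices per bundle. The Blow-up Lemma then yields a perfect packing of the remaining bundle minus those $<2h$ vertices.

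\textbf{Step 4: counting the leftover.} Summing over $i$ gives at most $2hK\le 2h\cdot 8\frac{d_G}{d}d^{-100/\ep^2}$ uncovered vertices. Since $d_G\le 1$, this is at most $\frac{8h}{d}d^{-100/\ep^2}$ up to the constant absorbed, which is the claimed form of $C$.
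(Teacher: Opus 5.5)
Your route differs from the paper's in how the exceptional vertices are absorbed. You cover each exceptional vertex by its own copy of $H$, using $h-1$ vertices of its bundle. The paper instead places the whole $X$-side of the absorbing copies inside the exceptional set $S_0$ and only the $Y$-side in a random half $T_1$ of the partner cluster (symmetrically $T_0$ with $S_1$), via the greedy common-neighbourhood argument of Claim~\ref{moho}.

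The gap is your assertion in Step~3 that each bundle ``remains super-regular by the Slicing lemma''. Fact~\ref{slicing} only gives $\ep$-regularity of large subpairs. It says nothing about minimum degrees, and the Blow-up Lemma needs them. This is not a technicality here:
\begin{itemize}
\item Theorem~\ref{pontfelbontas} only bounds the exceptional set by $4d^{1/3}n$ (it is applied with $n/2$).
\item An exceptional vertex is only guaranteed assignable clusters of total size about $\rho/3$. So your load balancing can only ensure that a cluster of size $m$ receives $O(d^{2/9}m)$ exceptional vertices.
\item Your absorption then deletes on the order of $h\,d^{2/9}m$ vertices from the pair.
\item A vertex of the bundle $G[A_i,B_i]$ has only $(d_i\pm\ep')m$ neighbours in the pair, where $d_i$ may be as small as $d-2\ep$, and $d\ll d^{2/9}$.
\end{itemize}
So a greedy (or Blow-up based) choice of the absorbing copies can delete the whole pair-neighbourhood of some surviving vertex, and that vertex then cannot be covered at the end.

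The paper avoids exactly this by fixing the random halves $S_1\subset S-S_0$ and $T_1\subset T-T_0$ \emph{before} absorbing. The non-exceptional vertices of the absorbing copies are taken only from $S_1\cup T_1$. The untouched random parts $S-S_0-S_1$ and $T-T_0-T_1$ then keep, by Chernoff, about half of every vertex's neighbourhood, and this is what makes $(S_2,T_2)$ super-regular. You need some such device. One option is a random reservation as in the paper. Another is a clean-up step that returns the few depleted vertices (by regularity at most $\ep' m$ per side) to the exceptional set and re-absorbs them, together with an argument that this process terminates.

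Two smaller points:
\begin{itemize}
\item After Step~2, Theorem~\ref{pontfelbontas}(iv) no longer controls $||A_i|-|B_i||$. Exceptional vertices arrive unevenly, and each absorbing copy removes $|X|$ vertices from one side and $|Y|$ from the other. You can repair this by choosing, for each absorbed vertex, whether it plays a vertex of $X$ or of $Y$; this shifts $|A_i|-|B_i|$ by $1\mp(|X|-|Y|)$.
\item A loss of $2h$ per bundle gives $C\le 2hK$. This overshoots the explicitly stated bound $\frac{8h}{d}d^{-100/\ep^2}$ by a factor of $2$ unless you cut the rounding loss to $h$ per bundle.
\end{itemize}
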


\medskip

 

Following the observation by Kühn and Osthus~\cite{KO}, Theorem~\ref{tetel2} implies the corollary below.

\begin{cor}
Given a graph $H$ without isolated vertices which is not a union of cycles and
a constant $0 < c\le 1,$ there exist $\gamma=\gamma(H, c) >0$ and $C = C(H, c)$ such that every 
$(cn \pm \gamma n)$-regular graph $G$ has a packing with 1-subdivisions of $H$ which covers all but at most $C$ vertices of $G.$
\end{cor}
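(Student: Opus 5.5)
We derive the corollary from Theorem~\ref{tetel2}, following the reduction of Kühn and Osthus~\cite{KO}. Let $H'$ denote the 1-subdivision of $H$. It is bipartite, with one class $X$ consisting of the $|E(H)|$ subdivision vertices and the other class $Y=V(H)$. So $|X|=e(H)$ and $|Y|=v(H)$.

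The first step is to check that $H'$ is unbalanced, that is, $e(H)\neq v(H)$. A convenient route is to pass to a disjoint union of copies of $H$. For an integer $t\ge 1$, let $tH$ be the disjoint union of $t$ copies of $H$. The 1-subdivision of $tH$ is $t$ vertex-disjoint copies of $H'$, so a packing with 1-subdivisions of $tH$ is in particular a packing with copies of $H'$. We may therefore replace $H$ by $tH$ whenever this helps.

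We then split into cases.
\begin{itemize}
\item If $e(H)>v(H)$, then $|X|>|Y|$ directly.
\item If $e(H)<v(H)$, then $|Y|>|X|$. Theorem~\ref{tetel2} is symmetric in the two classes once they are renamed, so this case is handled the same way.
\item If $e(H)=v(H)$, then $H'$ is balanced and the theorem does not apply directly.
\end{itemize}
Here is where the hypotheses enter. A graph without isolated vertices in which $e(H)=v(H)$ for every component, with no vertex of degree one, is exactly a union of cycles. So in the bad case, $H$ has a component with a vertex of degree at least $3$ together with leaves, or some other imbalance. The standard remedy in~\cite{KO} is to pack with a graph that is slightly larger but still unbalanced. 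Concretely, one finds $H'$-packings inside a suitable unbalanced bipartite graph $H''\supseteq H'$ with $|X(H'')|\neq|Y(H'')|$, built by combining copies of $H'$. Any $H''$-packing contains an $H'$-packing covering the same vertices up to a bounded error, and the resulting constant $C$ depends only on $H$ and $c$. I would follow the Kühn--Osthus argument here verbatim.

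The final step is choosing parameters. Given $c$, fix $d<1/100$ with $10d^{1/9}\le c/2$, and put $\ep=(d/10)^5$, so that $10\ep^{1/5}\le d$. Set $\rho=cn$ and $\gamma=d^4c/20$. A $(cn\pm\gamma n)$-regular graph is then $(1\pm d^4/10)\rho$-regular, and $\rho\ge 10d^{1/9}n$.

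For $n>\exp(150\log(1/d)/\ep^2)$, Theorem~\ref{tetel2} leaves out at most $C(H'',\ep,d)$ vertices. Smaller $n$ is absorbed into $C$ by taking $C\ge n_0$.

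The main obstacle is the balanced case $e(H)=v(H)$. This is where the hypothesis that $H$ is not a union of cycles must be used to manufacture an unbalanced bipartite host pattern for the subdivisions. I would try first to reproduce the exact construction from~\cite{KO}.
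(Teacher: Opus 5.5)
You correctly isolate the case $e(H)=v(H)$ as the obstacle, and the other cases go through as you say. If $e(H)\neq v(H)$, apply Theorem~\ref{tetel2} to the 1-subdivision $H'$ with the larger class named $X$. The same works if $e(H)=v(H)$ but some component $H_j$ has $e(H_j)\neq v(H_j)$: flip the bipartition of that component of $H'$. Your parameters ($\rho=cn$, $\gamma=d^4c/20$) work just as the paper's do.

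\textbf{The gap.} The remaining case is when every component of $H$ is unicyclic, e.g.\ a triangle with a pendant edge. Then every component of $H'$ is connected with classes of equal size, and your step ``find an unbalanced $H''\supseteq H'$ built from copies of $H'$ whose packings yield $H'$-packings with bounded loss'' cannot be carried out. Disjoint unions of copies of $H'$ (including your $tH$ trick) are balanced in every bipartition. Any vertex of $H''$ outside its copies of $H'$ is lost once per copy, so the loss is linear in $n$.

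\textbf{The statement is false in this case.} No other argument can close this case, because the statement with 1-subdivisions fails there. Take $c=1/4$ and any $0<\gamma\le 1$; larger $\gamma$ only admits more graphs. Let $G$ be bipartite with parts of sizes $n/2+s$ and $n/2-s$, where $s=\gamma n/10$. Let all vertices of the larger part have degree $n/4$, and all vertices of the smaller part have degree $\frac{n}{4}\cdot\frac{5+\gamma}{5-\gamma}\le n/4+\gamma n$; such biregular graphs exist up to rounding. Every copy of $H'$ in $G$ meets both parts in the same number of vertices, since each component of $H'$ is connected and balanced. Hence every $H'$-packing misses at least $2s=\gamma n/5$ vertices, which exceeds any constant $C$.

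\textbf{The paper's proof has the same hole.} Its one-line proof claims that a graph which is not a union of cycles has more edges than vertices. This is false for trees, which is harmless by the symmetry you point out. It is also false for unicyclic graphs other than cycles, which is fatal. So your case analysis is more accurate than the paper's, but the last case has to be handled by changing the statement, not by a construction.

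\textbf{How to repair the statement.} The hypothesis ``not a union of cycles'' is exactly right for packings with \emph{arbitrary} subdivisions $TH$, which is, as far as I can tell, the form K\"uhn and Osthus state. Suppose $e(H)=v(H)$. Since $H$ has no isolated vertices and is not 2-regular, it has a leaf $\ell$. Subdivide every edge once except the pendant edge at $\ell$, which you leave unsubdivided. This gives a bipartite subdivision with classes of sizes $v(H)-1$ and $e(H)$, which is unbalanced, and Theorem~\ref{tetel2} applies to it. Alternatively, keep 1-subdivisions and assume that some component $H_j$ of $H$ has $e(H_j)\neq v(H_j)$.
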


\noindent {\bf Proof:} The statement follows from the fact that if $H$ is not a union of cycles,
then it has more edges than vertices, hence, the 1-subdivision of $H$ must be unbalanced. 
Let $\ep, d$ and $n$ be numbers that satisfy the conditions in Theorem~\ref{tetel2}, and set $\rho=cn,$ 
$\gamma=d^4c/10.$ Applying Theorem~\ref{tetel2} finishes the proof.   
\qedf
  
\medskip
  
Before turning to the proofs of Theorem~\ref{tetel1} and~\ref{tetel2}, let us have a further remark on the size of $H$ in Theorem~\ref{tetel1}. Denote $R(H)$ the Ramsey number of
$H,$ that is, the least integer $n$ such that in any 2-coloring of the edges of $K_n,$ some
monochromatic copy of $H$ must always be formed. Graham, R\"odl and Ruci\'nski~\cite{GRR}
proved that for every $D\ge 1$ and $h\ge 6$ there exists a bipartite graph $H$ on $h$ vertices with maximum degree at most
$D$ which satisfies $R(H)> 2^{cD}h$ for a positive constant $c.$ 

Now set $N=R(H)-1,$ and consider any 2-coloring of the edges of $K_N.$ 
Denote $F$ the subgraph containing the edges of the majority color, then $F$ has $N$ vertices and density $d_F\ge 1/2.$ 
By Theorem~\ref{dekomp-epsz}, $F$ contains an $\ep$-regular pair with parts having cardinality at least $n/2$ where
$n=2^{-50/\ep^2}N.$ Let $G$ denote this $\ep$-regular pair. 
The bound on $R(H)$ implies that $H\not\subset G,$ even though $h\le 2^{-cD}N=2^{50/\ep^2-cD}n,$ while by Theorem~\ref{tetel1} if
$\widetilde{h}\le C_{\ep}\exp(-(D+101\log (1/d)/\ep^2))n/D,$ then $G$ admits an almost perfect $\widetilde{H}$-packing
for some bipartite graph $\widetilde{H}.$ 
Hence, the graphs by which we can pack $G$ are not significantly smaller than the ones which can be found in $G$ at all -- for that we choose $\ep, d$ and $r$ to be constants and the graph $H$ with $D=\Delta(H)$ so that $cD\gg 50/\ep^2.$

\section{The proofs of the packing theorems}

The proofs of our theorems rely on the vertex decomposition of Theorem~\ref{pontfelbontas}, 
we make use of the quasirandomness of the pairs in the supermatching. However, for proving Theorem~\ref{tetel1} we use different tools from those used for proving Theorem~\ref{tetel2}.

\subsection{The proof of Theorem~\ref{tetel1}}

We need a result, which can be found e.g.~in the survey
paper~\cite{FS} on the dependent random choice method by Fox and Sudakov; the following is Theorem 6.1 in~\cite{FS}.

\begin{theorem}\label{seged1} Let $H$ be a bipartite graph with $h$ vertices and maximum degree $D\ge 1.$ If $d >0$ and
$G$ is a graph with $n \ge 8Dd^{-D} h$ vertices and at least $d \binom{n}{2}$ edges, then $H$ is a subgraph of $G.$
\end{theorem}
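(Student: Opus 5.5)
This is the dependent random choice statement (Theorem 6.1 of the Fox--Sudakov survey), so I would follow their two-step scheme: first find a large set $U$ in which every small subset has many common neighbours, then embed $H$ greedily into $U$ and those common neighbourhoods.

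\textbf{Step 1 (dependent random choice).} Let $H$ have parts $V_1, V_2$. Choose a vertex set $T$ of $D$ vertices of $G$ uniformly at random, with repetition, and let $U = N(T)$ be the set of common neighbours of $T$. By convexity,
$$\mathbb{E}|U| = \sum_{v} (\deg v/n)^D \ge n\Bigl(\frac{2e(G)}{n^2}\Bigr)^D \ge n\,(d(n-1)/n)^D,$$
which is roughly $n d^D$. Call a $D$-subset $S \subseteq U$ \emph{bad} if $|N(S)| < h$. A fixed $S$ lies in $U$ only if $T \subseteq N(S)$, which happens with probability $(|N(S)|/n)^D$. Hence the expected number of bad $D$-sets in $U$ is at most $\binom{n}{D}(h/n)^D \le h^D/D!$. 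Removing one vertex from each bad set gives $U' \subseteq U$ in which every $D$-subset has at least $h$ common neighbours, and
$$\mathbb{E}|U'| \ge n (d(n-1)/n)^D - h^D/D!.$$
Using $n \ge 8Dd^{-D}h$, I want to conclude that $|U'| \ge h$ for some choice of $T$. Since $h \ge 1$, the subtracted term $h^D/D!$ can in principle be large compared with $nd^D$, so this estimate does not go through as stated.

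\textbf{Step 1, corrected.} I would instead ask only that subsets of size at most $D$ have at least $h$ common neighbours. I would also pick $T$ of size $t$ with $t$ tuned (Fox--Sudakov use roughly $t = D$ together with the hypothesis $n \ge 8Dd^{-D}h$), apply Lemma 2.1 of their survey in the form $\mathbb{E}|U| - \binom{n}{D}(m/n)^t \ge a$, and handle the $(n-1)/n$ and $\binom{n}{2}$ factors carefully. This is where the constant $8D$ comes from, and matching it is the main obstacle of the whole proof. I would first try $t = D$ with the requirement ``every $D$-set in $U'$ has $\ge h$ common neighbours''. If the numbers fall short, I would bound the bad term using $h \le d^D n/(8D)$, which gives a bad-term contribution of at most $(d^D/8D)^D n^D/D!$.

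\textbf{Step 2 (embedding).} Embed $V_1$ injectively into $U'$, which is possible since $|U'| \ge h$. Then embed the vertices of $V_2$ one at a time. Each $y \in V_2$ has at most $D$ neighbours, all in $V_1$, whose images already lie in $U'$. Their common neighbourhood therefore has size at least $h$, and fewer than $h$ vertices have been used so far, so a free vertex is available for $y$. This gives a copy of $H$ in $G$.
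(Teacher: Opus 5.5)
Your Step 2 is fine, but Step 1 has a genuine gap, and the ``corrected'' version does not repair it. Both versions still require \emph{every} $D$-subset of $U'$ to have at least $h$ common neighbours while $|U'|\ge h$. The hypothesis $n\ge 8Dd^{-D}h$ allows $h$ of order $n$, up to about $d^Dn/(8D)$. This is also the regime used in the proof of Theorem~\ref{tetel1}, where $h$ is comparable to the cluster size. In that regime the deletion argument fails for every choice of $t$, so it is not a matter of tuning constants. For a $d(n-1)$-regular $G$ one has exactly $\mathbb{E}|U|=n(d(n-1)/n)^t\approx nd^t$. So $\mathbb{E}|U|\ge h$ with $h\approx d^Dn/(8D)$ forces $d^{t-D}\gtrsim 1/(8D)$, which means $t$ is bounded. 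But then the bad term $\binom{n}{D}(h/n)^t\approx\binom{n}{D}(d^D/8D)^t$ is of order $n^D$. For $D\ge 2$ this is far larger than $n\ge\mathbb{E}|U|$. Your fallback estimate $(d^D/8D)^Dn^D/D!$ is exactly this $\Theta(n^D)$ quantity, so it cannot be absorbed.

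The missing idea is to tolerate bad sets, counting them against $|U|^D$ rather than $|U|$, and to pay for this with a more careful embedding. This is essentially the argument of~\cite{FS}; the paper quotes the result without proof.

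\textbf{Finding $U$.} Take $T$ a random $D$-tuple and let $Z$ be the number of bad $D$-subsets of $U=N(T)$. Convexity gives $\mathbb{E}|U|^D\ge(\mathbb{E}|U|)^D\gtrsim(d^Dn)^D$, while $\mathbb{E}Z\le\binom{n}{D}(h/n)^D\le\frac{1}{D!}(d^Dn/8D)^D$. Both are of order $n^D$. Fix $T$ with $|U|^D-cZ\ge\mathbb{E}[|U|^D-cZ]$ for a suitable $c$. This gives $|U|\ge 4Dh$, with only about a $(4D)^{-D}$ fraction of the $D$-subsets of $U$ bad.

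\textbf{Good sets.} Call $S\subseteq U$ with $|S|\le D$ good if it lies in fewer than $(4D)^{-(D-|S|)}\binom{|U|}{D-|S|}$ bad $D$-sets. Then $\emptyset$ is good and a good $D$-set is not bad. Double counting shows that for good $S$, at most $|U|/(4D)$ vertices $v$ make $S\cup\{v\}$ not good.

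\textbf{Embedding.} Embed $V_1$ into $U$ one vertex at a time, keeping, for every $y\in V_2$, the image of its already embedded neighbours good. Each step excludes at most $D\cdot|U|/(4D)+h<|U|$ vertices. At the end each $N_H(y)$ maps to a good set. That set lies in some non-bad $D$-set, so it has at least $h$ common neighbours. Only at this point does your greedy Step 2 finish the embedding.
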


\smallskip

With this we are ready for the proof of Theorem~\ref{tetel1}. 

\smallskip

\noindent {\bf Proof:} If $H$ is balanced, let $H'=H.$ If not, then let $H'$ be the disjoint union of two copies of
$H$ arranged so that $H'$ becomes balanced. In either case, we may assume that $H'$ has $2h$ vertices.

If $n$ is odd, leave out an arbitrary vertex of $G.$ For simplicity we keep the notation $n$ for the number of vertices. Then divide $V$ randomly into two parts, $A$ and $B$ such that $|A|=|B|=n/2.$ For that, first distribute the vertices of $G$ into two sets, $A'$ and $B',$ using random coin flipping. By Chernoff's bound
$||A'|-|B'||\le 10\sqrt{n\log n}$ with high probability, moreover, for every $v\in V$ we have that 
$deg(v, A'), deg(v, B') = (1\pm d^4/5)\rho/2$ (here we also use that 
$d\gg 1/n^c$ for any fixed $c>0$). Next, if $|A'|>|B'|,$ then choose $|A'|-n/2$ vertices of $A'$
arbitrarily, and put them into $B'.$ Similarly, if $B'$ is larger, we put $|B'|-n/2$ vertices into $A'.$ Denote the
two parts we obtained by $A$ and $B,$
then  
$deg(v, A), deg(v, B) = (1\pm d^4/4)\rho/2$ for every $v\in V.$

Set $r=(1+d^4/4)\rho/2.$ Simple computation shows that every degree in 
$G[A, B]$ is at least $(1-d^4/2)r,$ and at most $r.$ 
Now we can apply Theorem~\ref{pontfelbontas} to $G,$ with $n$ replaced by $n/2.$
We obtain the $(\ep', d_i)$-bundles $G[A_i, B_i],$ where $\ep'<4\ep^{1/5}.$ 
For every $1\le i\le K$ set $m_i=|A_i|\approx |B_i|.$ Then $m_i\ge d^{101/\ep^2}n/8.$ 

Fix an arbitrary $i\ge 1.$ Assume that we have already embedded vertex disjoint copies of
$H'$ into $G[A_i, B_i],$ and the vacant parts $A'_i\subset A_i$ and $B'_i\subset B_i$ are not
too small: $|A'_i|\ge \ep'|A_i|$ and $|B'_i|\ge \ep'|B_i|.$ Then $e(G[A'_i, B'_i])\ge (d-2\ep')(\ep'm_i)^2$
by $\ep'$-regularity, therefore, $G[A'_i, B'_i]$ has density $>d/3.$ By Theorem~\ref{seged1} we can find a copy of $H'$ in $G[A'_i, B'_i],$ if $$2\ep'm_i \ge 8D(d/3)^{-D}2h.$$ Substituting the bound for $m_i$ we 
conclude that $G[A'_i, B'_i]$ is sufficiently large and dense in order to contain a copy of $H'.$

We repeate this procedure for every $1\le i\le K$ until  the number of vacant vertices drops below
$\ep'm_i.$ Since $H'$ is balanced and the $G[A_i, B_i]$ pairs are almost balanced, when this happens,
altogether less than $3\ep'm_i$ vertices remain vacant. 
Hence, the total number of uncovered vertices in non-exceptional clusters
is at most $3\ep' n/2.$
Since $|A_0|+|B_0|\le 8d^{1/3}n/2$ and $3\ep' \le 12 \ep^{1/5}< d,$ we proved  what was desired. 
\qedf

\subsection{The proof of Theorem~\ref{tetel2}}

The beginning is identical to that of the proof of Theorem~\ref{tetel1}. First divide $V$ into two parts, $A$ and $B$ randomly, as in the proof of Theorem~\ref{tetel1}, then apply Theorem~\ref{pontfelbontas} for $G[A, B].$ 

Next we distribute the vertices of $A_0\cup B_0$ among the non-exceptional clusters $A_i, B_i,$ where $i\ge 1,$
as evenly as possible. We want to assign the vertices of $A_0\cup B_0$ so that no cluster will receive many of them. 
We say that a vertex $v$ 
can be assigned to a non-exceptional cluster $A_i,$ if $deg(v, B_i)\ge \rho|B_i|/(3n);$ similarly, $v$ can be assigned
to a non-exceptional cluster $B_i,$ if $deg(v, A_i)\ge \rho |A_i|/(3n).$ 

Given an arbitrary $v\in A_0\cup B_0,$ there are more than $\rho/3$ vertices in those
non-exceptional clusters to which $v$ can be assigned: this follows from the fact that $v$ can have
less than $|A_0\cup B_0|< 4d^{1/3}n<\rho/3$ neighbors in $A_0\cup B_0,$
and the number of neighbors of $v$ in those clusters to which we cannot assign $v$ is 
at most $\rho/3.$ 

There
are at most $4d^{1/3}n$ vertices to be assigned to non-exceptional clusters, and for each such
vertex we can choose from clusters having a total of at least $\rho/3$ vertices. Hence, even if all
vertices of $A_0\cup B_0$ can be assigned to the same subset of clusters, we can do the assignment
relatively evenly: if a cluster has $m$ vertices, then at most $4d^{1/3}n m/(\rho/3)\le 12d^{2/9}m$
vertices are assigned to it. 

The vertices that we distributed this way are called {\it exceptional } vertices,
and the complement of them is the set of {\it non-exceptional} vertices. 

Our first goal is to incorporate the exceptional vertices into embedded copies of $H.$ Say, that $(S, T)$
is a non-exceptional cluster pair with $|T|=m,$ where $S_0\subset S$ includes the exceptional vertices that were assigned
to the cluster, and we define $T_0\subset T$ analogously. 

Let $S_1\subset S-S_0$ and $T_1\subset T-T_0$ be random subsets, both are obtained by independent coin flippings. Consider the subgraphs $(S_0, T_1)$ and $(S_1, T_0).$ We are going to embed copies of $H$
into these subgraphs until there are only a constant number of vertices remain uncovered in $S_0,$
respectively, $T_0.$ This is done using a simple greedy method, discussed below.

Assume that we have found $k$ vertex disjoint copies of $H$ in $(S_0, T_1),$ and there are still $s$ uncovered vertices in $S_0.$ We make sure that the larger part, $X$ is always mapped to vacant part of $S_0.$ 
Hence, less than $|S_0| \le 12 d^{2/9}m< 25d^{2/9}|T_1|$ vertices are not vacant in $T_1.$ 
In the beginning every $v\in S_0$ had at least $\rho|T_1|/(4n) \ge 2d^{1/9}|T_1|$
neighbors in $T_1$ with high probability using the Chernoff bound. Hence, more than $d^{1/9}|T_1|$ of these are still vacant. Set $\delta=d^{1/9}.$ We need the following.

\begin{claim}\label{moho}
Let $S'\subset S_0$ be an arbitrary subset. Then there exists $u\in T_1$ such that $deg(u, S')\ge \delta |S'|.$
\end{claim}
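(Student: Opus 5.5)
Double counting the edges between $S'$ and $T_1$ (restricted to vacant vertices) gives the claim by averaging. By the preceding paragraph, every $v\in S_0$ has at least $2\delta|T_1|$ neighbours in $T_1$ with high probability, and of these more than $\delta|T_1|$ are still vacant. I would count only edges into the vacant part of $T_1$, because that is what the greedy embedding needs afterwards. For $S'=\emptyset$ the claim is vacuous, so assume $S'\neq\emptyset$.

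\textbf{Lower bound on edges.} Let $T_1^{\rm vac}\subseteq T_1$ be the set of vacant vertices. Summing the degree bound over $S'$ gives
$$e(S',T_1^{\rm vac})=\sum_{v\in S'}\deg(v,T_1^{\rm vac})> |S'|\,\delta|T_1|.$$

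\textbf{Averaging.} The same quantity equals $\sum_{u\in T_1^{\rm vac}}\deg(u,S')$. There are at most $|T_1^{\rm vac}|\le |T_1|$ terms, so some $u\in T_1^{\rm vac}\subseteq T_1$ satisfies
$$\deg(u,S')\ge \frac{e(S',T_1^{\rm vac})}{|T_1|}>\delta|S'|,$$
which proves the claim. As a bonus, the vertex $u$ is vacant.

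\textbf{Where the work lies.} The only non-trivial input is the per-vertex lower bound $\deg(v,T_1)\ge 2\delta|T_1|$ for all $v\in S_0$ at once, and this is the step I expect to need care. Assignability gives $\deg(v,T)\ge\rho|T|/(3n)$. Since $T_1$ is a coin-flip subset of $T-T_0$, with $|T_0|\le 12d^{2/9}|T|$ negligible, the expected value of $\deg(v,T_1)$ is about $\rho|T|/(6n)$ and $|T_1|\approx |T|/2$. Theorem~\ref{Chernoff} then applies because $\rho|T|/n\ge 10d^{1/9}m$ and $m$ is polynomially large in $n$ by Theorem~\ref{pontfelbontas}(iii). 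A union bound over the at most $n$ exceptional vertices and the $K<n$ clusters keeps the failure probability $o(1)$. With $\rho\ge 10d^{1/9}n$ this yields $\deg(v,T_1)\ge \rho|T_1|/(4n)\ge 2\delta|T_1|$.

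Subtracting the fewer than $25d^{2/9}|T_1|\le\delta|T_1|$ occupied vertices of $T_1$ (using $d<1/100$) leaves more than $\delta|T_1|$ vacant neighbours, as used above. The averaging step itself is routine.
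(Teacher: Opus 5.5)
Your double-counting argument is the paper's own proof. You count the edges between $S'$ and the vacant part of $T_1$, use that each $v\in S_0$ has more than $\delta|T_1|$ vacant neighbours there, and average over the vacant vertices, which also makes $u$ vacant. Given that input, the argument is correct.

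The problem is in your side verification of that input. The step ``$25d^{2/9}|T_1|\le\delta|T_1|$ using $d<1/100$'' is false. It requires $d^{1/9}\le 1/25$, i.e.\ $d\le 25^{-9}$; at $d=10^{-2}$ one has $25d^{2/9}\approx 9$ against $d^{1/9}\approx 0.6$. The paper's paragraph before the claim makes the same jump, so you inherited it, but it cannot be justified as written.

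It is repaired by tightening the constants.
\begin{itemize}
\item The even distribution puts at most $4d^{1/3}n\cdot 3m/\rho\le 1.2\,d^{2/9}m$ exceptional vertices into a cluster of size $m$; the computation gives $1.2$, not $12$. So fewer than roughly $2.5\,d^{2/9}|T_1|$ vertices of $T_1$ are ever occupied.
\item Assignability is defined with respect to the original cluster $T\setminus T_0$, of which $T_1$ is a fair coin-flip subset. So Theorem~\ref{Chernoff} gives $\deg(v,T_1)\ge (1-o(1))\rho|T_1|/(3n)\ge 3d^{1/9}|T_1|$ directly.
\item Your detour claiming $T_0$ is negligible is therefore unnecessary. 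With the constant $12d^{2/9}$ it is also false for $d$ near $1/100$.
\end{itemize}
Hence $v$ retains at least $(3-2.5d^{1/9})\,d^{1/9}|T_1|>\delta|T_1|$ vacant neighbours, since $d^{1/9}<0.6$.
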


\noindent {\bf Proof:} Count the edges between $S'$ and $T'_1,$ where $T'_1$ denotes the vacant part of 
$T_1.$ This number is 
at least $|S'|\cdot \delta |T'_1|.$ Taking the average for vertices in $T'_1$ we obtain what was desired. \qedf

\smallskip

We repeatedly apply Claim~\ref{moho}, and get the vacant vertices $u_1, \ldots, u_t \in T_1$ 
such that $deg(u_i, N(u_1, \dots, u_{i-1})\cap S'_0)\ge \delta^i|S'_0|$ for $1\le i\le t,$ where
$S'_0$ denotes the (shrinking) subset of vacant vertices in $S_0.$ 
  
If $t=|Y|$ and $\delta^t|S'_0|\ge |X|,$ then a new copy of $H$ can be embedded. 
It is easy to see that, since $h$ is a constant, we can proceed this way until $|S'_0|$ goes below
a certain constant, which is at most $h/\delta^h.$   

Repeat this procedure for the bipartite subgraph $(S_1, T_0).$ After finishing it, we denote the subset of vacant vertices left in $S$ by $S_2$ and the subset of vacant vertices left in $T$ by $T_2.$ 
Consider now the new pair $(S_2, T_2).$ Since $(S, T)$ was an $(\ep', d)$-super-regular pair, the new pair is $(3\ep', d/3)$-super-regular: we may have $3\ep'$ instead of $\ep'$ since the new cluster sizes
are at least about half of the previous ones, and we added only a constant number of new high degree vertices
(these are the exceptional vertices). Moreover, $S_2$ contains the random subset $S-S_1$ and $T_2$ contains the random subset $T-T_1.$
Hence, by Chernoff's bound every vertex in $S_2\cup T_2$ has sufficiently many neighbors in the opposite cluster. 

Since $H$ is a fixed, unbalanced bipartite graph on $h$ vertices, the size of the larger part divided
by the size of the smaller part is larger than $1+1/h.$ Using this, we may assign (but not map) copies of $H,$ one by one, 
so that vertices of $X$ is assigned to the larger cluster (recall, that $|X|>|Y|$). 
We proceed this way until we
have assigned enough copies of $H$ for balancing out the clusters: set $\Delta=||S_2|-|T_2||$ and $\Delta_H=|X|-|Y|$, then for balancing we need $\lfloor \Delta/\Delta_H\rfloor$ copies of $H.$ 

Next we let $H'$ be the union of two copies $H,$ such that both vertex parts of $H'$ are $X\cup Y.$ 
We continue the assignment with the copies of $H'.$ 
 It is easy to see that after finishing the assignment, the number of vertices assigned to $S_2\cup T_2$ is larger than $|S_2\cup T_2|-h.$ 
 
Finally, we apply the Blow-up lemma~\cite{KSSzBl, KSSzBl2} for embedding the vertex disjoint copies of
$H$ we assigned to the pair $(S_2, T_2).$ 

Repeate this procedure for every 
$(A_i, B_i)$ ($i\ge 1$) pair in the decomposition. Using the previous observation, 
at most $C\le K\cdot h\le \frac{8h}{d}\cdot d^{-100/\varepsilon^2}$ vertices remain uncovered.  
\qedf


\begin{thebibliography}{99}

\bibitem{AY} N.~Alon, R.~Yuster (1996), {\it $H$-factors in dense graphs.} J.~Combin.~Theory B {\bf 66} 269--282.  

\bibitem{CF} D.~Conlon and J.~Fox, Bounds for graph regularity and removal lemmas, {\it GAFA} {\bf 22} (2012)
1191--1256.

\bibitem{CH} K.~Corr\'adi, A.~Hajnal, (1963), On the maximal number of independent circuits in a graph. ´
{\it Acta Math. Acad. Sci. Hungar.} {\bf 14} 423-439.

\bibitem{Cs} B.~Csaba, {\it A stability theorem for embedding bounded degree spanning trees,} manuscript.

\bibitem{aus} B.~Csaba, {\it Regular decomposition of the edge set of a graph with applications,} Australasian Journal of Combinatorics, {\bf 89(2)} (2024), 249-267.


\bibitem{FS} J.~Fox, B.~Sudakov (2011), {\it Dependent random choice.} Random Structures \& Algorithms. {38\bf } (1-2): 
68--99.  doi:10.1002/rsa.20344

\bibitem{Gowers} W.~T.~Gowers,  {\it Lower bounds of tower type for Szemer\'edi’s uniformity lemma,} {\it GAFA} {\bf 7} (1997)
322-337. 


\bibitem{GRR} R.~Graham, V.~R\"odl and A.~Ruci\'nski, {\it On bipartite graphs with linear Ramsey numbers,} Combinatorica {\bf 21} (2) (2001) 199--209.

\bibitem {HSz} A.~Hajnal, E.~Szemer\'edi, (1970) Proof of a conjecture of Erd\H os. In {\it Combinatorial Theory
and its Applications,} Vol. II (P. Erd\H os, A. R\'enyi and V. T. S\'os, eds.), Colloq. Math. Soc. J. Bolyai {\bf 4}, North-Holland, Amsterdam, pp. 601-623.

\bibitem{JLR} S.~Janson, T.~Luczak, A.~Ruczinski, Random graphs, volume 45. John Wiley \& Sons, 2011.

\bibitem{KohRdl} Y.~Kohayakawa, V.~R\"odl, {\it Szemer\'edi’s regularity lemma and quasi-randomness.} In: Recent advances in algorithms and combinatorics. New York, NY: Springer New York, 289-351, 2003.

\bibitem{KSSzBl} Koml\'os, J., S\'ark\"ozy, G. N., Szemer\'edi, E. {\it Blow-up lemma} Combinatorica, 17(1):109 - 123, 1997.

\bibitem{KSSzBl2} Koml\'os, J., S\'ark\"ozy, G. N., Szemer\'edi, E. {\it An Algorithmic Version of the Blow-up
Lemma,} Random Struct. Alg., 12, 297-312, 1998.

\bibitem{KSSz} J.~Koml\'os, G.~S\'ark\"ozy, E.~Szemerédi (2001) {\it Proof of the Alon–Yuster conjecture.} ´
Discrete Math. {\bf 235} 255--269.


\bibitem{KS} J.~Koml\'os, M.~Simonovits (1996), {\it Szemer\'edi’s Regularity Lemma and its applications in graph
theory.} In: Combinatorics, Paul Erd\H os is Eighty, Vol II (D. Mikl\'os, V. T. S\'os, T. Sz\H onyi eds.),
J\'anos Bolyai Math. Soc., Budapest 295-352.

\bibitem{KO} D.~K\"uhn, D.~Osthus (2005) {\it Packings in dense regular graphs.} Combinatorics, Probability and Computing, {\bf 14(3)}: 325-337. 

\bibitem{KO09} D.~K\"uhn, D.~Osthus (2009) {\it The minimum degree threshold for perfect graph packings.} Combinatorica, {\bf 29(1)}: 65-107.

\bibitem{LMS} S.~Letzter, A.~Methuku, B.~Sudakov (2026) {\it Packing subgraphs in regular graphs}, \url{https://doi.org/10.48550/arXiv.2509.26180}

\bibitem{MPRT} R.~Montgomery, K.~Petrova, A.~Ranganathan, J.~Tan (2026) {\it Packing subdivisions into regular graphs}, \url{https://doi.org/10.48550/arXiv.2508.00480}


\bibitem{Peng} Y.~Peng, V.~Rödl, A.~Ruci\'nski, {\it Holes in graphs,} The Electronic Journal of Combinatorics {\bf 9} (2002), \#R1.



\bibitem{Szemeredi} E.~Szemerédi (1976), {\it Regular partitions of graphs,} Colloques Internationaux C.N.R.S. No 260
- Probl\'emes Combinatoires et Théorie des Graphes, Orsay 399--401.



\bibitem{V} J.~Verstra\"ete, (2002) {\it A note on vertex-disjoint cycles.} Combin. Probab. Comput. {\bf 11 }
97--102.


\end{thebibliography}
\end{document}